\documentclass[11pt]{article}
\usepackage[margin=1in]{geometry}
\usepackage{amsthm,amsmath}
\usepackage{latexsym,amssymb,amsmath}
\usepackage{graphicx,float,color,fancybox,shapepar,setspace,hyperref}
\usepackage{subfigure}
\usepackage{pgf,tikz}
\usepackage[affil-it]{authblk}
\usepackage{indentfirst}
\usepackage{hyperref}
\usepackage[section]{placeins}
\hypersetup
{
colorlinks=true,
linkcolor=blue,
filecolor=blue,
urlcolor=blue,
citecolor=cyan,
}
\usetikzlibrary{arrows}
\voffset -2cm

\makeatletter
\renewenvironment{proof}[1][\proofname]{\par
  \normalfont \topsep6\p@\@plus6\p@\relax
  \trivlist
  \item[\hskip\labelsep
        \itshape
    #1\@addpunct{.}]\ignorespaces
}{%
  \hfill $\square$ 
  \endtrivlist\@endpefalse
}
\makeatother

\newtheorem{thm}{Theorem}

\newtheorem{claim}{Claim}[section]
\newtheorem{cor}{Corollary}
\newtheorem{lem}{Lemma}

 \def\qed{\hfill\square}

\def\~{\sim}

\marginparwidth 0pt
 \oddsidemargin 30pt
 \evensidemargin 0pt
 \topmargin 20pt
 \textheight 21.5 truecm
 \textwidth 14.5 truecm

\def\qed{ \hfill $\blacksquare$}

\begin{document}

\title{Ramsey numbers of sparse graphs versus disjoint books}
\author{Ting HUANG$^1$, Yanbo ZHANG$^2$, Yaojun CHEN$^{1,}$\footnote{Corresponding author. Email: yaojunc@nju.edu.cn}\\
{\small $^1$School of Mathematics, Nanjing University, Nanjing 210093, China}\\
{\small $^2$School of Mathematical Sciences, Hebei Normal University, Shijiazhuang 050024, China}}
 \date{}
\maketitle

\begin{abstract}
Let $B_k$ denote a book on $k+2$ vertices and $tB_k$ be $t$ vertex-disjoint $B_k$'s. Let $G$ be a connected graph with $n$ vertices and at most $n(1+\epsilon)$ edges, where $\epsilon$ is a constant depending on $k$ and $t$. In this paper, we show that the Ramsey number $$r(G,tB_k)=2n+t-2$$ provided $n\ge 111t^3k^3$. Our result extends the work of Erd\H{o}s, Faudree, Rousseau, and Schelp (1988), who established the corresponding result for $G$ being a tree and $t=1$.
\vskip 2mm
\noindent{\bf Keywords}: Ramsey number, sparse graph, book
\end{abstract}

\section{Introduction}

 Given two graphs $G$ and $H$, the \emph{Ramsey number} $r(G, H)$ is defined as the smallest positive integer $N$ such that, for every red-blue edge coloring of the complete graph $K_N$, there exists either a red subgraph isomorphic to $G$ or a blue subgraph isomorphic to $H$. When both $G$ and $H$ are complete graphs, $r(G, H)$ is the \emph{classical Ramsey number}. The \emph{chromatic number} of a graph $G$ is denoted by $\chi(G)$, and the \emph{chromatic surplus} of $G$ is defined as the cardinality of the smallest color class, taken over all proper colorings of $G$ using $\chi(G)$ colors. Burr~\cite{Burr1981} observed the following result. 

\begin{thm}[Burr~\cite{Burr1981}] \label{lowerbound}
    Let $G$ be a connected graph on $n$ vertices, and $H$ be a graph with chromatic number $\chi$ and chromatic surplus $s$. If $n\ge s$, then
    \begin{equation}\label{lower}
        r(G, H)\ge (n-1)(\chi-1)+s\,.
    \end{equation}
\end{thm}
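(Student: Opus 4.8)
The plan is to prove the bound by an explicit construction: for $N=(n-1)(\chi-1)+s-1$ we build a red-blue coloring of $K_N$ containing neither a red copy of $G$ nor a blue copy of $H$, which shows $r(G,H)>N$ and hence gives \eqref{lower}. Partition $V(K_N)$ into $\chi-1$ blocks $V_1,\dots,V_{\chi-1}$, each of size $n-1$, together with one further block $V_0$ of size $s-1$; this is possible because $s\ge 1$ (so $s-1\ge 0$) and the sizes sum to exactly $N$. Color an edge red if both of its endpoints lie in the same block, and blue otherwise. Here the hypothesis $n\ge s$ is invoked precisely to ensure $s-1\le n-1$, so that $V_0$ is no larger than the other blocks.

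For the red side, the red graph is the disjoint union of cliques with vertex sets $V_0,V_1,\dots,V_{\chi-1}$, so every red component has at most $n-1<n$ vertices. Since $G$ is connected on $n$ vertices, any copy of $G$ would have to lie inside a single red component, which is impossible. Hence there is no red $G$.

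For the blue side, the blue graph is exactly the complete multipartite graph with parts $V_0,V_1,\dots,V_{\chi-1}$, i.e.\ a complete $\chi$-partite graph. Suppose $H$ embeds into it. The partition of $V(H)$ inherited from the blocks is then a proper coloring of $H$ with at most $\chi$ colors; since $\chi(H)=\chi$, it uses all $\chi$ colors, so the part $V_0$ receives a nonempty color class, of size at most $|V_0|=s-1$. This contradicts the definition of the chromatic surplus, which asserts that in \emph{every} proper $\chi$-coloring of $H$ all color classes have size at least $s$. Therefore there is no blue $H$ either, and combining the two sides yields $r(G,H)\ge N+1=(n-1)(\chi-1)+s$.

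The argument is short, and the only point that needs genuine care is the blue side: one must observe that a copy of $H$ in a complete $\chi$-partite graph must meet every part (otherwise $H$ would be $(\chi-1)$-colorable, contradicting $\chi(H)=\chi$), since it is exactly this fact that makes the undersized part $V_0$ an obstruction to embedding $H$. The hypothesis $n\ge s$ plays only the minor but indispensable role of preventing $V_0$ from becoming a red clique large enough to contain $G$.
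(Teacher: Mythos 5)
Your construction is correct and is exactly the standard argument behind Burr's bound, which the paper only cites without proof: $\chi-1$ red cliques of order $n-1$ plus one block of order $s-1$, with red inside blocks and blue between them, so the connected $G$ cannot fit in a red component and any blue $H$ would force a proper $\chi$-coloring with a color class of size at most $s-1<s$. Your handling of the hypothesis $n\ge s$ (keeping $V_0$ too small for $G$) and of the blue side (a copy of $H$ must meet all $\chi$ parts) is precisely the intended use of the definitions, so there is nothing to add.
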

To establish a unified formula for the Ramsey numbers of a broader class of graphs, Burr~\cite{Burr1981} introduced the notion of Ramsey goodness. A connected graph $G$ is said to be \emph{$H$-good} if equality holds in inequality~(\ref{lower}). 
One of the most classical results in this area is due to Chv\'atal~\cite{Chvatal1977}, who proved that all trees $T_n$ are $K_k$-good.
\begin{thm}[Chv\'atal~\cite{Chvatal1977}]\label{Chv}
	$r(T_n,K_k)=(n-1)(k-1)+1\,.$
\end{thm}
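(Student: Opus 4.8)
The plan is to handle the two inequalities separately. For the lower bound, observe that $\chi(K_k)=k$ and that the chromatic surplus of $K_k$ is $1$ (in a proper $k$-colouring of $K_k$ every colour class has size $1$); since $n\ge 1$, Theorem~\ref{lowerbound} applied with $H=K_k$ gives $r(T_n,K_k)\ge (n-1)(k-1)+1$ at once. Concretely this is witnessed by the $2$-colouring of $K_{(n-1)(k-1)}$ built from $k-1$ vertex-disjoint red copies of $K_{n-1}$ by colouring every edge between distinct copies blue: each red component has order $n-1<n$, so there is no red $T_n$, and the blue graph is $(k-1)$-partite, so there is no blue $K_k$.

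For the upper bound I would induct on $k$. The base case $k=2$ is immediate: a red-blue colouring of $K_n$ with no blue edge has all edges red, and $K_n$ contains every tree on $n$ vertices. The workhorse for the inductive step is the elementary fact that \emph{any graph $G$ with $\delta(G)\ge n-1$ contains every tree $T_n$ on $n$ vertices}: list $V(T_n)=\{u_1,\dots,u_n\}$ so that for each $i\ge 2$ the vertex $u_i$ has exactly one neighbour among $u_1,\dots,u_{i-1}$ (for instance a breadth-first order from any root), and embed the $u_i$ into $G$ greedily; when placing $u_i$, its already-placed neighbour $u_j$ has at least $n-1$ neighbours in $G$, of which fewer than $n-1$ are occupied (at most $i-2\le n-2$), so a free image for $u_i$ exists.

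Now assume the statement holds for $k-1$, let $K_N$ be red-blue coloured with $N=(n-1)(k-1)+1$, and let $R$ be its red subgraph. If $\delta(R)\ge n-1$, then $R\supseteq T_n$ and we are done. Otherwise some vertex $v$ has red-degree at most $n-2$, hence blue-degree at least $N-1-(n-2)=(n-1)(k-1)-(n-1)+1=(n-1)\bigl((k-1)-1\bigr)+1$. Applying the induction hypothesis to the colouring induced on the blue neighbourhood of $v$ produces either a red $T_n$ (done) or a blue $K_{k-1}$; since $v$ sends blue edges to every vertex of its blue neighbourhood, in the latter case $v$ together with this $K_{k-1}$ is a blue $K_k$.

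The proof is short, and the one point demanding care is the degree bookkeeping in the inductive step, which is tight: the crucial ``$+1$'' in $(n-1)\bigl((k-1)-1\bigr)+1$ is exactly the slack $N-1-(n-2)-(n-1)(k-2)=1$, so the case split must be ``$\delta(R)\le n-2$'' rather than ``$\delta(R)\le n-1$'', and the embedding lemma must be stated with the sharp threshold $n-1$ on the minimum degree. I expect this bookkeeping, together with fixing the correct vertex ordering of $T_n$ for the greedy embedding, to be the main — and essentially the only — obstacle.
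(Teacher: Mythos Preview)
The paper does not prove Theorem~\ref{Chv}; it is quoted from Chv\'atal's original note~\cite{Chvatal1977} and used as a black box (for instance in the proof of Lemma~\ref{upper}), so there is no in-paper argument to compare against. Your proof is correct and is precisely the standard one: the lower bound is the Burr construction (or equivalently Theorem~\ref{lowerbound}), and the upper bound is induction on $k$ using the greedy tree-embedding fact that any graph of minimum degree at least $n-1$ contains every $n$-vertex tree, together with the observation that a vertex of small red degree has a blue neighbourhood of size at least $(n-1)(k-2)+1$. The arithmetic and the case split at $\delta(R)\le n-2$ are handled correctly.
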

A tree is a minimally connected graph. Burr, Erd\H{o}s, Faudree, Rousseau, and Schelp~\cite{Burr1980} further showed that the Ramsey number remains the same if $T_n$
 is replaced by a sparse connected graph $G$ on $n$ vertices. 
\begin{thm}[Burr, Erd\H{o}s, Faudree, Rousseau, and Schelp~\cite{Burr1980}] \label{GK_m}
For $k\ge 3$, there is a positive integer $\varepsilon$ such that if $G$ is a connected graph with $n$ vertices and at most $n+\varepsilon n^{\frac{2}{k-1}}$ edges, then for $n$ sufficiently large, 
	\[
	r(G,K_k)=(n-1)(k-1)+1\,.\]
\end{thm}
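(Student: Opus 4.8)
\emph{Proof proposal.}
The lower bound $r(G,K_k)\ge (n-1)(k-1)+1$ is immediate from Theorem~\ref{lowerbound}, since $K_k$ has chromatic number $k$ and chromatic surplus $1$ and $n\ge 1$. So the task is the matching upper bound, and the plan is to show that every red--blue coloring of $K_N$ with $N=(n-1)(k-1)+1$ and no blue $K_k$ contains a red copy of $G$. Throughout, $R$ denotes the red graph, so $\alpha(R)\le k-1$.

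The first move is the classical Chv\'atal-type degree reduction, carried out by induction on $k$, with $\varepsilon=\varepsilon_k$ non-increasing in $k$ so that every $G$ admissible for parameter $k$ is also admissible for $k-1$ (since $2/(k-1)\le 2/(k-2)$); the case $k=2$ is trivial, as $r(G,K_2)=n$. In the inductive step, if some vertex $v$ has blue degree at least $(n-1)(k-2)+1=r(G,K_{k-1})$, then restricting to the blue neighborhood of $v$ and applying the induction hypothesis yields either a red $G$ (and we are done) or a blue $K_{k-1}$, which together with $v$ is a blue $K_k$ --- a contradiction. Hence we may assume every vertex has blue degree at most $(n-1)(k-2)$, so $\delta(R)\ge (n-1)(k-1)-(n-1)(k-2)=n-1$. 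Everything then reduces to the following main lemma: \emph{if $R$ has $N=(n-1)(k-1)+1$ vertices with $\delta(R)\ge n-1$ and $\alpha(R)\le k-1$, then $R$ contains every connected graph on $n$ vertices with at most $n+\varepsilon_k n^{2/(k-1)}$ edges.}

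For the main lemma I would play the structure of $R$ off against the sparsity of $G$. Put $c=\varepsilon_k n^{2/(k-1)}$. On the $G$ side, connectivity forces $e(G)\le (n-1)+(c+1)$, so a short contraction argument shows that every subgraph of $G$ on $h$ vertices has at most $h+c$ edges; in particular $\omega(G)=O(\sqrt c)$, the $2$-core $C$ of $G$ has at most $2c$ vertices of degree $\ge 3$, and $G$ is obtained from $C$ --- which consists of $O(c)$ branch vertices linked by $O(c)$ internally $2$-regular bare paths, and all of whose dense substructures have size $O(\sqrt c)$ --- by appending pendant forests. On the $R$ side, one vertex per component forms an independent set, so $R$ has at most $k-1$ components and its largest component $R_1$ has at least $\lceil N/(k-1)\rceil=n$ vertices; and since $r(k,t)=O(t^{k-1})$, every linear-sized subset of $R_1$ contains a clique of size $\Omega(n^{1/(k-1)})$, so $R_1$ contains many vertex-disjoint cliques of this size --- which is exactly why the exponent $2/(k-1)$ is the right one. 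The embedding is then assembled in three stages: first, place the $O(c)$ branch vertices and the $O(\sqrt c)$-sized dense pieces of $G$ into vertex-disjoint cliques of $R_1$; second, realize the $O(c)$ bare paths, of their prescribed lengths, as internally disjoint paths of $R_1$ avoiding the $O(c)$ vertices already used; third, grow the pendant forests one vertex at a time, each inserted vertex being a leaf at that moment, so that fewer than $n$ vertices of $R$ are ever in use and $\delta(R)\ge n-1$ always leaves a valid image. The surplus $N-n=(n-1)(k-2)$ supplies the room needed to avoid collisions throughout.

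The hard part is the second stage and the structural input about $R$ it relies on: one must prove that a graph on exactly $(n-1)(k-1)+1$ vertices with $\alpha\le k-1$ and $\delta\ge n-1$ has enough routing capacity to realize $O(c)$ internally disjoint paths of essentially prescribed lengths between prescribed terminals while avoiding a small forbidden set --- and, the vertex count being tight, the margins are thin. I expect the cleanest organization to be a stability dichotomy: either the coloring is within small edit distance of the extremal one, in which case $R$ contains $k-1$ near-cliques, one of size $\ge n$, which alone absorbs all of $G$; or $R_1$ genuinely expands, and then a Friedman--Pippenger-type tree embedding handles the pendant forests and the bare-path routing, while Ramsey's theorem supplies the cliques for the dense pieces. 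Establishing this dichotomy --- a stability statement for $K_k$-free colorings --- and checking that $\varepsilon_k$ can be taken small enough, as a function of the constant implicit in $r(k,t)=O(t^{k-1})$, for the three stages to fit inside $N$ vertices, is the technical core.
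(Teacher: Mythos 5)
This statement is quoted in the paper as a known result of Burr, Erd\H{o}s, Faudree, Rousseau, and Schelp; the paper itself gives no proof of it, so your attempt can only be judged on its own terms. On those terms, the opening of your argument is sound: the lower bound via Theorem~\ref{lowerbound}, the Chv\'atal-style induction on $k$ that reduces everything to a red graph $R$ on $N=(n-1)(k-1)+1$ vertices with $\delta(R)\ge n-1$ and $\alpha(R)\le k-1$, and the structural facts you extract from sparsity (every $h$-vertex subgraph of $G$ has at most $h+c$ edges, $\omega(G)=O(\sqrt c)$, the $2$-core has $O(c)$ branch vertices joined by bare paths) are all correct, as is the observation that $\alpha(R)\le k-1$ plus $r(s,k)=O(s^{k-1})$ yields cliques of size $\Omega(n^{1/(k-1)})$ in linear-sized subsets, which is indeed the heuristic source of the exponent $2/(k-1)$.

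The genuine gap is exactly where you place it yourself: the embedding of the $2$-core of $G$ into $R$, in particular realizing $O(c)$ internally disjoint bare paths of \emph{prescribed} lengths between prescribed terminals while avoiding the vertices already used, with only $(n-1)(k-2)$ spare vertices. You do not prove this; you only name two hoped-for tools (a ``stability dichotomy'' for $K_k$-free colorings and a Friedman--Pippenger-type embedding) without establishing either, and neither is routine. Worse, the stability branch as stated is not even correct as a shortcut: in the extremal coloring the red graph is a union of $k-1$ cliques of size $n-1$, so being close to extremal gives near-cliques of size about $n-1$, not a clique-like set of size $n$ that ``alone absorbs all of $G$''; one class has $\ge n$ vertices only by a pigeonhole margin of $1$, and embedding an $n$-vertex graph into an $n$-vertex near-clique with edges missing is a spanning-embedding problem that needs its own argument. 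The expansion hypotheses required for a Friedman--Pippenger-type lemma are likewise never verified for $R_1$, and path routing with prescribed lengths is not what that method delivers. So what you have is a plausible plan whose technical core --- precisely the part that makes this theorem a theorem --- is missing; by contrast, the original argument (and the analogous proofs in this paper, e.g.\ Lemma~\ref{weak} and Theorem~\ref{mr}) handles suspended paths via the Bondy--Erd\H{o}s path-extension lemma and a trichotomy on the structure of $G$ (long suspended path, many pendant end-edges, or a vertex with many leaves), which avoids having to prove any stability or expansion statement about $R$ at all.
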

Luo and Peng extended Theorem \ref{Chv} in another direction by replacing one $K_k$ with $t$ copies of $K_k$, and proved that all large trees are also $tK_k$-good.
\begin{thm}[Luo and Peng~\cite{Luo2023}]\label{LuoPeng}
	For any $t\ge 1$ and $k\ge 2$, if $n$ is sufficiently large, then
\[
r(T_n,tK_k)=(n-1)(k-1)+t\,.
\]
\end{thm}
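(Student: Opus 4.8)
The plan is to take the lower bound from Burr's Theorem~\ref{lowerbound} and the upper bound from combining the $(n-2)$-degeneracy of a $T_n$-free graph with a maximum blue-$K_k$-packing argument and Chv\'atal's Theorem~\ref{Chv}.

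\emph{Lower bound.} The graph $tK_k$ has chromatic number $k$, and in any proper $k$-colouring each of the $t$ cliques uses every colour exactly once, so each colour class has exactly $t$ vertices; hence the chromatic surplus of $tK_k$ is $t$. As $n$ is large we have $n\ge t$, so Theorem~\ref{lowerbound} gives $r(T_n,tK_k)\ge(n-1)(k-1)+t$. (Equivalently, colour $K_{(n-1)(k-1)+t-1}$ by letting the red graph be $(k-1)$ disjoint copies of $K_{n-1}$ together with one $K_{t-1}$ and colouring all other edges blue: no red component has $n$ vertices, and the blue graph is complete $k$-partite with one part of size $t-1$, so it has no $tK_k$.)

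\emph{Upper bound.} Put $N=(n-1)(k-1)+t$, fix a red/blue colouring of $K_N$ with no red $T_n$, and aim for a blue $tK_k$. The only input from ``$T_n$ is a tree'' is that a graph with minimum degree at least $n-1$ contains every tree on $n$ vertices (greedy leaf-by-leaf embedding); applied to every subgraph of the red graph $R$ this shows $R$ is $(n-2)$-degenerate, so in particular $|E(R)|\le(n-2)N$ and every vertex set spans a red subgraph with a vertex of red-degree at most $n-2$. Let $Q_1,\dots,Q_m$ be a maximum family of pairwise disjoint blue $K_k$'s and $V'=V\setminus\bigcup_i Q_i$. If $m\ge t$ we are done, so suppose $m\le t-1$. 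Then $V'$ has no blue $K_k$, and, containing no red $T_n$, it satisfies $|V'|\le(n-1)(k-1)$ by Theorem~\ref{Chv}; hence $mk=N-|V'|\ge t$, so $\lceil t/k\rceil\le m\le t-1$ and $|V'|=(n-1)(k-1)-(mk-t)$ lies within $mk-t\le(t-1)(k-1)-1$ of the Chv\'atal threshold, which for $n$ large is a tiny deficit.

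The remaining work, which I expect to be the main obstacle, is a structural analysis of $V'$ followed by a re-packing. From the degeneracy and $\alpha(R[V'])\le k-1$ (which holds since $V'$ has no blue $K_k$) one can peel off a low-red-degree vertex and pass to its red-non-neighbourhood $k-2$ times, the independence number dropping each time, to produce inside $V'$ a red clique $C$ on at least $|V'|-(k-2)(n-1)\ge(n-1)-O(tk)$ vertices; since a vertex with $n-1$ red-neighbours in a red clique would yield a red $K_n\supseteq T_n$, this clique $C$ (and, with more care, a family of $k-1$ disjoint such clusters) interacts with the rest of the colouring in a constrained way, forcing $R[V']$ to be close to $(k-1)$ disjoint copies of $K_{n-1}$. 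One then rebuilds: because $tk\le N$ (equivalently $t\le n-1$, valid for $n$ large) there is room, and the $mk\ge t$ vertices of $\bigcup_i Q_i$ together with the large, red-constrained cluster(s) supply, via a Hall-type counting argument, $t$ pairwise disjoint blue $K_k$'s — each taking one vertex from each cluster and one further vertex blue-adjacent to all of them — contradicting $m\le t-1$. Making this structural step precise (quantifying how close $R[V']$ is to $(k-1)K_{n-1}$, controlling all error terms, and guaranteeing the re-packing has room) is exactly where the hypothesis that $n$ is large compared with $t$ and $k$ gets used, and is the crux of the argument.
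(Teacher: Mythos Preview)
The paper does not prove Theorem~\ref{LuoPeng}: it is quoted as a result of Luo and Peng~\cite{Luo2023} in the introduction, with no argument given, so there is no ``paper's own proof'' to compare against.

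On the proposal itself: the lower bound is correct, and the opening of the upper bound (take a maximum blue $K_k$-packing $Q_1,\dots,Q_m$, apply Chv\'atal's Theorem~\ref{Chv} to the leftover $V'$ to get $|V'|\le(n-1)(k-1)$ and hence $m\ge\lceil t/k\rceil$) is standard and fine. But what follows is not a proof, only a programme: you explicitly label the structural analysis of $V'$ and the subsequent re-packing as ``the main obstacle'' and ``the crux of the argument'' without carrying them out. Two concrete gaps: (i) the peeling step is phrased as passing to the red \emph{non}-neighbourhood of a low-red-degree vertex and asserting the independence number drops, but the red non-neighbourhood of $v$ is exactly $N_B(v)\cup\{v\}$, and the correct inference is that $N_B(v)$ contains no blue $K_{k-1}$ --- this needs to be iterated carefully to produce $k-1$ large red cliques, not one; (ii) the ``Hall-type counting argument'' that converts $m$ existing blue $K_k$'s plus a near-extremal $V'$ into $t$ disjoint blue $K_k$'s is asserted, not proved, and this is precisely where the work in Luo--Peng (and in the analogous arguments in the present paper) lies. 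As written, the proposal is a reasonable outline of a possible approach, but it stops before the substantive step.
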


Recently, Zhang and Chen \cite{ZC} tried to generalize Theorem \ref{Chv} in both directions by
investigating which sparse graphs are $tK_k$-good and established the following.
\begin{thm} (Zhang and Chen \cite{ZC}) 
	Let $G$ be a connected graph with $n$ vertices and $n+\ell-2$ edges. For any integers $k\ge 2$ and $t\ge 1$, there exists a positive constant $c$ such that if $1\le \ell \le cn^{\frac{2}{k-1}}$ and $n$ is sufficiently large, then
	\[
		r(G,tK_k)=(n-1)(k-1)+t\,.\]
\end{thm}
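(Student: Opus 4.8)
The plan is to prove the two inequalities of $r(G,tK_k)=(n-1)(k-1)+t$ separately; the lower bound is immediate and all the work lies in the upper bound. For the lower bound, apply Theorem~\ref{lowerbound} with $H=tK_k$: since $\chi(tK_k)=k$ and, in any proper $k$-colouring of $tK_k$, each of the $t$ copies of $K_k$ receives all $k$ colours exactly once, every colour class has exactly $t$ vertices, so the chromatic surplus of $tK_k$ equals $t$; as $n\ge t$ for $n$ large, (\ref{lower}) gives $r(G,tK_k)\ge(n-1)(k-1)+t$.

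For the upper bound put $N=(n-1)(k-1)+t$ and fix a red--blue colouring of $K_N$ with no blue $tK_k$; let $R,B$ be the red and blue graphs, and aim for a red $G$. I would induct on $k$: the base $k=2$ (where $r(G,tK_2)=n+t-1$) is a short consequence of K\"onig's theorem applied to $B$, and the single-copy facts $r(G,K_j)=(n-1)(j-1)+1$ are available from Theorem~\ref{GK_m}. For the induction step, let $s$ be the largest number of pairwise disjoint blue copies of $K_k$, so $s\le t-1$; let $\mathcal Q$ be their union and $W=V(K_N)\setminus\mathcal Q$, so $B[W]$ has no $K_k$ and $|W|=(n-1)(k-1)+t-sk$. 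Three observations handle most cases. (i) If $sk\le t-1$ then $|W|\ge(n-1)(k-1)+1=r(G,K_k)$, so $W$ already contains a red $G$. (ii) If $B[W]$ has no blue $tK_{k-1}$ then, since $sk\le(t-1)k\ll n$ gives $|W|\ge(n-1)(k-2)+t=r(G,tK_{k-1})$, the induction hypothesis yields a red $G$ in $W$. (iii) A Chv\'atal-type pruning --- if some $w\in W$ had $\ge r(G,K_{k-1})$ blue neighbours in $W$, its blue neighbourhood would contain a red $G$ or a blue $K_{k-1}$, the latter with $w$ a blue $K_k$ in $W$, impossible --- shows that every vertex of $W$ has red degree within $W$ at least $n+t-sk-2$, while $\alpha(R[W])\le k-1$.

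It remains to treat $sk\ge t$ with $B[W]$ containing a blue $tK_{k-1}$, on disjoint sets $S_1,\dots,S_t$. Then the red degrees inside $W$ fall short of $n-1$ by exactly $d:=sk-t+1\ge1$, and $|W|$ falls short of $(n-1)(k-1)+1$ by the same $d$. Since $|\mathcal Q|=sk\ge t$, one plays the $W$--$\mathcal Q$ edges against appending $\mathcal Q$-vertices to the $S_i$: if some distinct $q_1,\dots,q_t\in\mathcal Q$ have $q_i$ blue-complete to $S_i$, then $\{S_i\cup\{q_i\}\}$ is a blue $tK_k$, a contradiction; otherwise Hall's theorem forces the red $W$--$\mathcal Q$ edges to be numerous and well spread, and one uses them to adjoin $d$ vertices of $\mathcal Q$ to $W$ and restore the missing degree, producing a red subgraph on $(n-1)(k-1)+1$ vertices with minimum degree $\ge n-1$ and independence number $\le k-1$, into which the sparse $G$ embeds by the core argument of Theorem~\ref{GK_m}. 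This final passage --- quantifying the Hall trade-off, keeping the independence number below $k$ while enlarging $W$, and making the embedding of $G$ robust to a bounded number of ``defect'' vertices of slightly low red degree --- is the step I expect to be the real obstacle; it is precisely here that the hypothesis $\ell\le cn^{2/(k-1)}$ (via the supersaturation / K\H{o}v\'ari--S\'os--Tur\'an estimate behind Theorem~\ref{GK_m}) and the largeness of $n$ are consumed.
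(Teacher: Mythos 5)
Your lower bound and the easy reductions (i)--(iii) are fine, but the proof has a genuine gap exactly where you say you expect "the real obstacle": the case $sk\ge t$ with a blue $tK_{k-1}$ inside $W$ is not an argument, only a hope, and each of its three ingredients fails as stated. First, the Hall alternative to "some distinct $q_1,\dots,q_t\in\mathcal{Q}$ with $q_i$ blue-complete to $S_i$" only gives an index set $I$ such that fewer than $|I|$ vertices of $\mathcal{Q}$ are blue-complete to any $S_i$ with $i\in I$; for the other vertices of $\mathcal{Q}$ this yields merely one red edge into each such $S_i$ (sets of size $k-1$), which is nowhere near the red degree $\ge n-1$ into $W$ that you need in order to "adjoin $d$ vertices of $\mathcal{Q}$ and restore the missing degree". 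Second, adjoining any $q\in\mathcal{Q}$ can create a blue $K_k$ inside the enlarged set (namely $q$ together with a blue $K_{k-1}\subseteq W$, and by hypothesis $W$ contains many blue copies of $K_{k-1}$), so the assertion that the red independence number stays $\le k-1$ after augmentation is unjustified. Third, the intended endgame --- "a red graph on $(n-1)(k-1)+1$ vertices with minimum degree $\ge n-1$ and independence number $\le k-1$ contains every sparse connected $G$ on $n$ vertices" --- is not the statement of Theorem~\ref{GK_m} and cannot be invoked as its "core argument": Theorem~\ref{GK_m} is a Ramsey-number statement, its proof does not reduce to these two parameters, and for graphs with cycles a greedy minimum-degree embedding (which is what suffices for trees) does not work; making the embedding robust to $d$ defective vertices is precisely the content that is missing. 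A smaller but real inaccuracy: the base case $k=2$ is not "a short consequence of K\"onig's theorem" (K\"onig is a bipartite statement, and $r(K_n,tK_2)=n+2t-2>n+t-1$, so sparseness of $G$ must genuinely enter); the correct statement is Lemma~\ref{tK2} (with Lemma~\ref{2K_2}), which itself requires an argument exploiting independent sets of $G$.

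For comparison, this theorem is quoted here from Zhang and Chen~\cite{ZC} and is not proved in the present paper; judging from the machinery imported from that source, the actual route is structural rather than your induction-on-$k$ scheme with a maximal family of disjoint blue $K_k$'s. One applies the Trichotomy Lemma (Lemma~\ref{trichotomy}) to $G$ itself: either $G$ has a long suspended path (handled by shortening, embedding the smaller graph, and re-extending via the Bondy--Erd\H{o}s path lemma, Lemma~\ref{path extension}), or $G$ has a large matching of end-edges (handled via Hall's theorem, Lemma~\ref{matching}), or $G$ has a vertex with very many leaves (handled via star versus $tK_k$/book-type Ramsey results and a greedy embedding), which is the same three-case template used in Sections~\ref{section3} and~\ref{section5} here. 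If you want to complete a proof along your lines, the missing augmentation-and-embedding step would have to be replaced by, or supplemented with, this kind of structural analysis of $G$.
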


For two graphs $G$ and $H$, $G\cup H$ denotes their disjoint union, and $G + H$ represents the disjoint union of two
 graphs $G$ and $H$, together with the new edges connecting every vertex of $G$  to every vertex of $H$.
Let $B_k^{(p)}$ denote the graph consisting of $k$ copies of $K_{p+1}$ all sharing a common $K_p$, that is, $B_k^{(p)}=K_p+\overline{K}_k$, which is called
the book graph. Here $\overline{K}_k$ is the complement of a complete graph $K_k$ on $k$ vertices.
In the study of classical Ramsey numbers, $B_k^{(p)}$ has played a pivotal role. Recently, $B_k^{(p)}$ has been found an unexpected application in the groundbreaking work of Campos, Griffiths, Morris, and Sahasrabudhe \cite{CGMS2023}, who proved for some $\epsilon > 0$, 
$$r(K_n,K_n)\leq (4-\epsilon)^n$$
 if $n$ is large enough, for which a key step of the proof is to find large books, as detailed in the Book Algorithm (Section 3 of \cite{CGMS2023}).

Let $B_k$ denote $B_k^{(2)}$,  
i.e., $B_k=K_2+\overline{K}_k$. Also, $B_k=K_1+K_{1,k}$.
The study of Ramsey numbers of $B_k$ versus other graphs also has long attracted significant attention, with new results continually emerging. As early as 1978, Rousseau and Sheehan \cite{RS} showed that the star on $n$ vertices is $B_k$-good. 
\begin{thm} (Rousseau and Sheehan \cite{RS})\label{bookstar}
    For any $k\geq 1$, if $n\geq 3k-3$, then
    $$r(K_{1,n-1},B_k)=2n-1.$$
\end{thm}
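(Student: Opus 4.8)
The plan is to prove the two bounds separately, the lower bound being immediate from Theorem~\ref{lowerbound} and the upper bound coming from a short local argument in the blue graph.

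For the lower bound $r(K_{1,n-1},B_k)\ge 2n-1$, I would simply compute the relevant parameters of $B_k=K_2+\overline{K}_k$. It contains a triangle and is properly $3$-coloured by assigning the two vertices of $K_2$ distinct colours and all of $\overline{K}_k$ the third, so $\chi(B_k)=3$; moreover any proper $3$-colouring is forced to do exactly this, so the chromatic surplus is $s(B_k)=1$. Theorem~\ref{lowerbound} then gives $r(K_{1,n-1},B_k)\ge (n-1)(3-1)+1=2n-1$. (Equivalently, one can just exhibit the colouring of $K_{2n-2}$ in which red is two vertex-disjoint copies of $K_{n-1}$: then every red degree is $n-2$, so there is no red $K_{1,n-1}$, and the blue graph is the triangle-free $K_{n-1,n-1}$, so there is no blue $B_k$.)

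For the upper bound, suppose $K_{2n-1}$ is $2$-coloured with no red $K_{1,n-1}$ and no blue $B_k$, and let $G$ be the blue graph on $V$, $|V|=2n-1$. Absence of a red $K_{1,n-1}$ means every red degree is at most $n-2$, hence $\delta(G)\ge (2n-2)-(n-2)=n$; absence of a blue $B_k$ means $|N(u)\cap N(v)|\le k-1$ for every edge $uv$ of $G$. First I would record that for an edge $uv$, inclusion–exclusion gives $|N(u)\cap N(v)|\ge d(u)+d(v)-(2n-1)$, so $d(u)+d(v)\le 2n+k-2$. The core step is then a local analysis at a vertex $u$ of \emph{minimum} degree. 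Put $A=N(u)$ and $B=V\setminus(A\cup\{u\})$, so $|A|=\delta(G)$ and $|B|=2n-2-\delta(G)$. For $v\in A$, the edge $uv$ forces $|N(v)\cap A|=|N(v)\cap N(u)|\le k-1$; on the other hand, since $d(v)\ge\delta(G)$, partitioning $N(v)$ into $\{u\}$, $N(v)\cap A$, $N(v)\cap B$ yields $|N(v)\cap A|=d(v)-1-|N(v)\cap B|\ge \delta(G)-1-|B|=2\delta(G)-2n+1\ge 1$. Hence $G[A]$ has minimum degree at least $1$, so it contains an edge $v_1v_2$.

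Now $u$ is a common neighbour of $v_1$ and $v_2$, so $B_k$-freeness gives $|N(v_1)\cap N(v_2)\cap B|\le k-2$. On the other hand, $|N(v_i)\cap B|=d(v_i)-1-|N(v_i)\cap A|\ge \delta(G)-k$ for $i=1,2$, so inclusion–exclusion inside $B$ gives $|N(v_1)\cap N(v_2)\cap B|\ge 2(\delta(G)-k)-|B|$. Combining the two estimates, $2(\delta(G)-k)-|B|\le k-2$, i.e.\ $|B|\ge 2\delta(G)-3k+2$; substituting $|B|=2n-2-\delta(G)$ gives $\delta(G)\le (2n+3k-4)/3$, and since $\delta(G)\ge n$ this forces $n\le 3k-4$, contradicting the hypothesis $n\ge 3k-3$. (The case $k=1$, where $B_1=K_3$, is subsumed: there $G[A]$ would have to be edgeless, contradicting $\delta(G[A])\ge 1$.) I expect the only delicate point is exactly this bookkeeping: one must localise at a vertex of minimum degree — localising at a vertex of maximum degree would make $B$ too small — so that the lower and upper bounds on $|N(v_1)\cap N(v_2)\cap B|$ meet precisely at the threshold $n=3k-3$; everything else is routine.
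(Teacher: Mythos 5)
Your argument is correct, and both halves check out: the lower bound via $\chi(B_k)=3$, $s(B_k)=1$ (or the explicit two-red-$K_{n-1}$ colouring) is exactly the standard construction, and the counting in the upper bound is sound — for a blue edge $uv$ one has at most $k-1$ common blue neighbours, $\delta$ of the blue graph is at least $n$, the edge $v_1v_2$ inside $N(u)$ exists because $2\delta-2n+1\ge 1$, and the two estimates on $|N(v_1)\cap N(v_2)\cap B|$ give $n\le 3k-4$, contradicting $n\ge 3k-3$, with the $k=1$ case degenerating harmlessly. Note, however, that the paper itself gives no proof of this statement: it is imported verbatim from Rousseau and Sheehan \cite{RS} and used as a black box (in Case 3 of the proof of Theorem \ref{mr} and as the base case of Theorem \ref{t bookstar}), so there is no internal argument to compare against; your contribution is a short self-contained proof that reproduces the exact threshold $n\ge 3k-3$. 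One small quibble with your closing commentary: the choice of a \emph{minimum}-degree vertex is not actually what makes the computation close. Since every blue degree is at least $n$, the same bookkeeping at an arbitrary vertex $u$ gives $2\delta(G)+d(u)\le 2n+3k-4$, and $\delta(G),d(u)\ge n$ yields the same contradiction; indeed a large $d(u)$ only shrinks $B$ and strengthens the lower bound on the common neighbourhood, so "localising at a vertex of maximum degree" would work just as well. This does not affect correctness, since taking $u$ of minimum degree is a legitimate (merely unnecessary) choice.
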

In 1988, Erd\H{o}s, Faudree, Rousseau, and Schelp \cite{EFRS0} showed that any large trees are also $B_k$-good. 
\begin{thm}(Erd\H{o}s, Faudree, Rousseau, and Schelp \cite{EFRS0})
  For any $k \geq 1$, if $n\geq 3k-3$, then
  $$r(T_n,B_k)=2n-1.$$
\end{thm}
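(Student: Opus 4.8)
The plan is to get the lower bound from Burr's Theorem~\ref{lowerbound} together with a standard colouring, and the upper bound by a degree/structure analysis of the red graph, the key point being a clean reformulation of ``no blue $B_k$''.

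\textbf{Lower bound.} Since $B_k\supseteq K_3$ we have $\chi(B_k)=3$, and in any proper $3$-colouring of $B_k$ the two vertices of the $K_2$ receive distinct colours, forcing all $k$ vertices of $\overline{K}_k$ into the third class, so the chromatic surplus of $B_k$ is $1$. Theorem~\ref{lowerbound} gives $r(T_n,B_k)\ge (n-1)(3-1)+1=2n-1$. (Concretely: split the vertices of $K_{2n-2}$ into two sets of size $n-1$, colour the edges inside each set red and all edges between the two sets blue; the red components have only $n-1$ vertices, and the blue graph is bipartite, hence triangle-free.)

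\textbf{Upper bound, set-up.} Fix a tree $T_n$ and a red/blue colouring of $K_{2n-1}$ with no blue $B_k$; I want a red $T_n$. Two facts are used. First, a graph with minimum degree at least $n-1$ contains every tree on $n$ vertices (place the vertices of the tree one at a time, each new one a leaf whose neighbour is already placed; a placed vertex has $\ge n-1$ neighbours and at most $n-2$ of them are used, so an unused neighbour is free). Hence I may assume the red graph $R$ has minimum degree $\le n-2$, i.e. the blue graph $B=\overline R$ has $\Delta(B)\ge n$. Second, using $B_k=K_1+K_{1,k}$, a blue $B_k$ is exactly a vertex $u$ together with a blue star $K_{1,k}$ inside $N_B(u)$; thus ``no blue $B_k$'' is equivalent to: for every vertex $u$, the blue graph on $N_B(u)$ has maximum degree $\le k-1$.

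\textbf{Main step.} Pick $v$ with $\deg_B(v)=\Delta(B)$ and set $S=N_B(v)$ (so $|S|\ge n$) and $T=V(K_{2n-1})\setminus(S\cup\{v\})=N_R(v)$, so $|T|=2n-2-|S|\le n-2$. By the reformulation, $B[S]$ has maximum degree $\le k-1$. Let $S_{\mathrm{hi}}=\{s\in S:\deg_R(s)\le n-2\}$. For $s\in S_{\mathrm{hi}}$ we have $\deg_B(s)\ge n$, and since $v\in N_B(s)$ and $s$ has $\le k-1$ blue neighbours in $S$, it has at least $n-k$ blue neighbours in $T$ (so $|T|\ge n-k$ whenever $S_{\mathrm{hi}}\ne\emptyset$; this is also where $k=1$ degenerates, as then $B[S]$ is edgeless and $R[S]\supseteq K_n$ already). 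If $s,s'\in S_{\mathrm{hi}}$ and $ss'$ were a blue edge, then $s$ and $s'$ would have at least $2(n-k)-|T|\ge n-2k+2$ common blue neighbours in $T$, and together with $v$ (a common blue neighbour outside $T$) at least $n-2k+3$ common blue neighbours; since $n\ge 3k-3$ this is $\ge k$, so $s,s'$ and any $k$ of these neighbours span a blue $B_k$ — contradiction. Hence $R[S_{\mathrm{hi}}]$ is complete, and if $|S_{\mathrm{hi}}|\ge n$ then $R[S_{\mathrm{hi}}]\supseteq K_n\supseteq T_n$ and we are done.

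\textbf{The main obstacle.} It remains to handle $|S_{\mathrm{hi}}|\le n-1$, i.e. $S_{\mathrm{lo}}:=S\setminus S_{\mathrm{hi}}\ne\emptyset$ with every vertex of $S_{\mathrm{lo}}$ of red-degree $\ge n-1$. Now $R[S]=K_{|S|}\setminus B[S]$ with $\Delta(B[S])\le k-1$ and, by the claim just proved, every blue edge of $B[S]$ meets $S_{\mathrm{lo}}$; so $S$ has many vertices red-adjacent to all of $S$. I would finish by embedding $T_n$ into $R[S]$: place the non-leaf vertices of $T_n$ first, preferring the few vertices incident to blue edges, and route the leaves last into the remaining ``red-universal'' vertices of $S$, using $|S|\ge n$ and $\delta(R[S])\ge|S|-k$. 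Making this robust is the crux of the argument: one must control the number of leaves of $T_n$ against $|S_{\mathrm{hi}}|$, and separately treat the sub-case where $S$ contains almost no vertex of red-degree $\le n-2$ (so the clique claim is vacuous and one works directly with the $\ge n$ vertices of $S$ of red-degree $\ge n-1$). Both points should again reduce to the slack in the hypothesis $n\ge 3k-3$.
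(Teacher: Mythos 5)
This statement is quoted by the paper from Erd\H{o}s--Faudree--Rousseau--Schelp (1988) without proof, so there is no in-paper argument to compare against; judging your attempt on its own terms, it is incomplete. The lower bound is correct, and the first half of your upper-bound argument is sound: picking $v$ of maximum blue degree, $S=N_B(v)$ with $|S|\ge n$, noting $\Delta(B[S])\le k-1$, and showing (using $|T|\le n-2$ and $n\ge 3k-3$) that no blue edge joins two vertices of $S_{\mathrm{hi}}$, which settles the case $|S_{\mathrm{hi}}|\ge n$ and also the case $k=1$. But the remaining case is not a loose end to be ``made robust''; it is the actual content of the theorem, and your sketch for it does not work as stated. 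The plan to embed $T_n$ into $R[S]$ fails in principle: in the obstacle case $S_{\mathrm{lo}}$ need not be small --- it can be all of $S$ --- so the claim ``every blue edge of $B[S]$ meets $S_{\mathrm{lo}}$'' gives no red-universal vertices at all. For instance, if $|S|=n$ and $B[S]$ is $(k-1)$-regular, then every vertex of $R[S]$ has degree exactly $n-k<n-1$, so not even the star $K_{1,n-1}$ embeds inside $S$; any completion must leave $S$ and exploit the global red degree $\ge n-1$ of the vertices of $S_{\mathrm{lo}}$, whose red neighborhoods spill into $T$, a region on which your choice of $v$ imposes no structure whatsoever (you know nothing about blue books or red degrees inside $T$). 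Having $n$ vertices of red degree $\ge n-1$ scattered in $K_{2n-1}$ does not by itself yield every $n$-vertex tree without further argument.

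Closing this gap is where the real work lies, and it is telling that the known proofs do not proceed by a single neighborhood analysis: the star case is a separate theorem of Rousseau and Sheehan (Theorem \ref{bookstar} in this paper), and the tree (and sparse-graph) arguments in this line of work run a case analysis on the structure of $T_n$ itself --- a long suspended path (handled via the Bondy--Erd\H{o}s path-extension lemma), a large matching of end-edges (handled via Hall's theorem), or a vertex with many leaves (handled via the star result) --- exactly the trichotomy the present paper uses for Theorems \ref{mr} and \ref{mr2}. Your proposal contains none of these ingredients for the hard case, so as it stands it proves the result only when $|S_{\mathrm{hi}}|\ge n$ or $k=1$, plus the star case implicitly; the general tree in the case $|S_{\mathrm{hi}}|\le n-1$ remains unproved.
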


Following the study of tree-book Ramsey numbers, significant attention has shifted to cycle-book Ramsey numbers. There are several results on $r(C_n, B_k)$. In 1991, Faudree, Rousseau, and Sheehan \cite{FRS 1991} proved that $C_n$ is $B_k$-good for $n\geq 2k+2$. The condition $n\geq 2k+2$ was improved by Shi \cite{Shi 2010} to $n> (6k+7)/4$. In 2021, Lin and Peng \cite{LinPeng 2021} studied $r(C_n, B_k)$ when $n$ and $k$ are nearly equal. They showed that if $n\geq 1000$, then $C_n$ is $B_k$-good for $n= \lceil 3k/2 \rceil+1$ and $C_n$ is not $B_k$-good for $8k/9+112\leq n\leq  (3k+1)/2$. In 2025, Hu, Lin, Łuczak, Ning, and Peng \cite{HLLNP 2025} determined
 the value of $r(C_n, B_k)$, provided that $n$ is linear with $k$ and $n$ is large enough, yielding improved bounds for cycle-book Ramsey numbers.

It is obvious that trees and cycles are both connected sparse graphs. Motivated by Theorem \ref{GK_m}, a natural and more general question is: whether a connected sparse graph $G$ is also $B_k$-good? In this paper, we answer this question in affirmative and obtain the following.
\begin{thm} \label{mr}
 Let $k$ be a positive integer and $n\geq 34k^3$. If $G$ is a connected graph with $n$ vertices
and at most $ n(1+1/(119 k^2 + 62k))$ edges, then
$$r(G,B_k)=2n-1.$$
\end{thm}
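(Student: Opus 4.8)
\emph{Lower bound.} The inequality $r(G,B_k)\ge 2n-1$ is a direct application of Theorem~\ref{lowerbound}: the book $B_k=K_2+\ol K_k$ contains a triangle and is properly $3$-colourable with colour classes of sizes $1,1,k$, so $\chi(B_k)=3$ and its chromatic surplus is $1$, whence $r(G,B_k)\ge 2(n-1)+1=2n-1$. Concretely, split $V(K_{2n-2})$ into two sets of size $n-1$, colour the edges inside the parts red and the edges between them blue: the red graph $2K_{n-1}$ has no component on $n$ vertices and hence contains no red $G$, while the blue graph $K_{n-1,n-1}$ is triangle-free and hence contains no blue $B_k$.

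\emph{Upper bound: set-up.} Fix a red/blue colouring of $K_N$ with $N=2n-1$ that contains no blue $B_k$; write $R$ and $\mc B$ for the red and blue graphs and try to embed $G$ into $R$. The colouring hypothesis is exactly the statement that every blue edge $uv$ has $|N_{\mc B}(u)\cap N_{\mc B}(v)|\le k-1$, and I would draw out two facts. First, for each vertex $v$ the blue graph on $N_{\mc B}(v)$ has maximum degree at most $k-1$ (else $v$, a blue edge in $N_{\mc B}(v)$, and $k-1$ more common blue neighbours inside $N_{\mc B}(v)$ span a blue $B_k$), so $R$ restricted to $N_{\mc B}(v)$ is a ``near-clique'' with minimum degree at least $|N_{\mc B}(v)|-k$. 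Second, if $\mc B$ had minimum degree at least $n+\tfrac{k-1}{2}$ then every blue edge would already have at least $k$ common blue neighbours; hence $R$ has a vertex of red-degree at least $n-O(k)$. Thus, letting $\Delta_{\mc B}$ be the maximum blue degree, if $\Delta_{\mc B}$ is small then $R$ is extremely dense, and if $\Delta_{\mc B}$ is large then the blue neighbourhood of a vertex attaining it is a red near-clique on $\Delta_{\mc B}$ vertices; the book analogue of the fact (used in Theorem~\ref{GK_m}) that a $K_k$-free graph is close to $(k-1)$-partite is that in the remaining, ``extremal'', range $\mc B$ is close to balanced bipartite and so $R$ is close to a disjoint union of two cliques of orders about $n-1$ and $n$.

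\emph{Upper bound: the $G$-side and the embedding.} Write $|E(G)|=n-1+q$ with $q\le \epsilon n+1$ and let $a$ be the number of leaves of $G$. A degree count gives $\sum_{\deg v\ge 3}(\deg v-2)=2q-2+a$, so $G$ has only $O(q+a)$ vertices of degree at least $3$; and since every subgraph of $G$ on $h$ vertices has at most $h-1+q$ edges, $G$ has degeneracy at most $2+\sqrt{2q}=O(\sqrt{\epsilon n})$. This splits the argument according to the shape of $G$: (i) if $\Delta(G)\ge n-O(k)$ then $G$ is, apart from $O(q)$ further edges, a star, which I would handle by adapting the argument behind Theorem~\ref{bookstar} inside a large red neighbourhood; (ii) if $G$ has few leaves then it has few vertices of degree $\ne2$ and its degree-$2$ vertices lie on correspondingly few bare paths, so $G$ contains a bare path with $\Omega(1/\epsilon)=\Omega(k^2)$ internal vertices; (iii) otherwise $G$ has many leaves and no vertex of very high degree. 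In cases (ii) and (iii) I would embed $G$ greedily in a degeneracy order into the dense or near-clique red structure, placing the few high-degree vertices of $G$ first at red vertices of large degree (which are plentiful, since otherwise $R$ would be too sparse and $\mc B$ too dense to be $B_k$-free), then using the $n-1$ spare host vertices to absorb the leaves and the long bare path to absorb any remaining slack; since embedding a vertex with $b$ already-embedded neighbours into a near-clique $W$ needs $|W|>n-1+(k-1)b$, this greedy step is exactly where the constraints $\epsilon\le 1/(119k^2+62k)$ and $n\ge 34k^3$ are consumed.

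\emph{The main obstacle.} I expect the difficult regime to be the ``extremal'' one, where $\mc B$ has about $N^2/4$ edges and $\Delta_{\mc B}$ is close to $n$: then the largest red near-clique $N_{\mc B}(v^{\ast})$ one can point to has only about $n$ vertices, leaving no room to place the $O(q)$ dense spots of $G$ greedily, so one is forced to prove a stability statement --- that a $B_k$-free graph this dense is within $O(k)$ edge-modifications of a balanced complete bipartite graph --- and thereby locate a genuine red clique on at least $n$ vertices, into which any graph on $n$ vertices embeds at once. Making this stability estimate effective with the stated $k^2$- and $k^3$-scale constants, while keeping it compatible with the other two regimes so that the three cases jointly cover every colouring, is the delicate part; the remainder --- fixing the embedding order of $G$ and checking that the spare vertices and the bare path suffice --- is bookkeeping once the structural picture is in place.
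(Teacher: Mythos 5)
Your lower bound is fine and matches the paper. But the upper bound as written is a research plan, not a proof, and the plan has a genuine hole exactly where the theorem's content lies. Your strategy is to classify colourings by the maximum blue degree $\Delta_{\mathcal B}$: small $\Delta_{\mathcal B}$ gives a dense red graph, large $\Delta_{\mathcal B}$ gives a red near-clique inside a blue neighbourhood, and the ``extremal'' middle is to be handled by a stability statement asserting that a dense $B_k$-free blue graph is within $O(k)$ modifications of a balanced complete bipartite graph, whence a red clique on $n$ vertices. None of these three regimes is actually carried out, and the middle one is problematic even as a plan: with $N=2n-1$ there is essentially no slack, so ``few'' blue edges inside a part of size about $n$ does not yield a red $K_n$ (deleting the offending vertices drops you below $n$), and more importantly the regime $\Delta_{\mathcal B}$ of intermediate size (say between $k\sqrt{n}$ and $n$) need not make the blue graph dense at all, so no stability theorem for book-free graphs applies there; your greedy embedding (which needs roughly $N-d\Delta_{\mathcal B}-(n-1)>0$ for degeneracy $d$) also fails there. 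You acknowledge this yourself (``the delicate part''), but that delicate part, together with an explicit verification that the regimes cover every colouring with the stated constants $119k^2+62k$ and $34k^3$, is precisely what a proof must supply.

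For comparison, the paper never analyses the blue graph's global structure. It runs the trichotomy on $G$ itself (Lemma~\ref{trichotomy}): either $G$ has a long suspended path, or a matching of $2k-1$ end-edges, or a vertex adjacent to many leaves. In the first case it embeds a shortened copy of $G$ using the weaker bound $r(H,B_k)\le 2|H|+k-2$ (Lemma~\ref{weak}, which rests on Lemma~\ref{upper} and Lemma~\ref{n+k-1}) and re-lengthens the path by the Bondy--Erd\H{o}s lemma (Lemma~\ref{path extension}); in the second it restores the end-edges via Hall's theorem (Lemma~\ref{matching}); in the third it finds a red $K_{1,n-1}$ via Theorem~\ref{bookstar}, places the high-leaf vertex at its centre, and embeds the rest greedily, using $r(G,K_{1,k})\le n+k-1$ to rule out failure. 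Your sketch gestures at pieces of this (the bare-path case, the star case), but routes the hardest colourings through an unproved extremal/stability analysis instead of through the quantitative lemmas that make the paper's argument close; as it stands the proposal does not establish the theorem.
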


Furthermore, if $B_k$ is replaced by the disjoint union of copies of a $B_k$, an intriguing question arises: are trees $T_n$ still $tB_k$-good? This problem has only recently begun to attract attention. In 2024, Guo, Hu and Peng \cite{GHP} demonstrated that any tree $T_n$ is $2B_2$-good for $n\ge 5$.
We establish a general result addressing this problem as follows. First, we prove that the star $K_{1,n-1}$ is $tB_k$-good.

\begin{thm}\label{t bookstar}
Let $k,t$ be positive integers and $n\geq 3tk+3t-5$. Then
    $$r(K_{1,n-1},tB_k)= 2n+t-2.$$
\end{thm}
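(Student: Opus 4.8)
The lower bound is immediate from Theorem~\ref{lowerbound}. Every copy of $B_k$ contains a triangle and is $3$-chromatic, so $\chi(tB_k)=3$, and in any proper $3$-colouring of $tB_k$ each copy of $B_k$ has colour-class sizes $1,1,k$ (the two vertices of $K_2$ get distinct colours and every vertex of $\overline K_k$ is forced into the third class), so the smallest colour class of $tB_k$ has size exactly $t$; that is, the chromatic surplus is $t$. Since $n\ge t$ follows from the hypothesis, Burr's bound gives $r(K_{1,n-1},tB_k)\ge 2(n-1)+t=2n+t-2$.

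For the upper bound I would colour $K_N$ with $N=2n+t-2$ and assume there is no red $K_{1,n-1}$; then every vertex has red degree at most $n-2$, hence blue degree at least $N-1-(n-2)=n+t-1$. Writing $B$ for the blue graph we have $\delta(B)\ge n+t-1$ and therefore $\alpha(B)\le N-\delta(B)=n-1$. Since $B_k\subseteq G$ is equivalent to $G$ having an edge of codegree at least $k$, it suffices to produce $t$ pairwise vertex-disjoint such configurations in $B$. The plan is to take a \emph{maximum} family $\mathcal F$ of vertex-disjoint blue copies of $B_k$, say $|\mathcal F|=s$, set $Y=V(\mathcal F)$ (so $|Y|=s(k+2)$) and $W=V\setminus Y$, assume $s\le t-1$, and derive a contradiction; by maximality $B[W]$ is $B_k$-free.

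The crucial step is a structural lemma: a $B_k$-free graph $G$ of large minimum degree is triangle-free. Indeed, if $vab$ were a triangle of $G$ then the spine edges $va,vb$ have codegree at most $k-1$, so each of $a,b$ has at least $\delta(G)-k$ neighbours outside $N_G(v)\cup\{v\}$, a set of size at most $|V(G)|-1-\delta(G)$; since $v$ is a common neighbour of $a$ and $b$, a double count gives $\mathrm{codeg}(ab)\ge 2(\delta(G)-k)-(|V(G)|-1-\delta(G))+1$, and for $G=B[W]$, where $|V(G)|=2n+t-2-s(k+2)$ and $\delta(G)\ge n+t-1-s(k+2)$, the bound $n\ge 3tk+3t-5$ (together with $s\le t-1$) forces this to be at least $k$, contradicting $B_k$-freeness. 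The same numerics also give $\delta(B[W])>\tfrac25|W|$, so by the Andr\'asfai--Erd\H{o}s--S\'os theorem $B[W]$ is bipartite, say $W=W_1\cup W_2$. Then each $W_i$ is an independent set of $B$, so $\delta(B[W])\le|W_i|\le\alpha(B)\le n-1$, whence $|W|=|W_1|+|W_2|\le 2n-2$ and therefore $s(k+2)\ge t$; in particular $s\ge 1$, which already settles the case $t=1$ (then $s=0$, but a bipartite graph on $N$ vertices cannot have minimum degree $n+t-1>N/2$).

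It remains to handle $1\le s\le t-1$ by exhibiting $t$ disjoint blue copies of $B_k$, contradicting maximality of $\mathcal F$. Because $W_1,W_2$ are independent in $B$, every blue edge with both ends in $W$ has all of its common neighbours inside the small set $Y$, so each book must be anchored at a vertex of $Y$: for $u\in Y$ one checks that $u$ has at least $n+t-s(k+2)$ neighbours in $W$, and choosing the part $W(u)$ containing a majority of them together with a neighbour $v_u\in W(u)$, a further double count shows that $u$ and $v_u$ have many common neighbours in the opposite part of $W$. I would then pick $t$ distinct anchors $u_1,\dots,u_t\in Y$, use the edges $u_iv_{u_i}$ as spines, and assign $k$ pairwise-distinct pages from $W$ to each book by a defect form of Hall's theorem. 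Verifying Hall's condition is exactly where $n\ge 3tk+3t-5$ enters most tightly, and the main obstacle is to carry this through \emph{for all} admissible $s$ simultaneously: large $s$ shrinks $W$ but then $\mathcal F$ itself supplies extra structure inside $Y$ that must be exploited, while small $s$ makes $Y$ too small to page any $W$-spined book; keeping careful track of $|W_1|,|W_2|,|Y|$ throughout this case analysis is the heart of the argument.
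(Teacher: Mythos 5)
Your lower bound and the first half of your upper-bound argument are sound and essentially parallel the paper: minimum blue degree $n+t-1$, a $B_k$-free blue graph on the complement of the chosen family of blue books, the codegree count showing this graph is triangle-free, Andr\'asfai--Erd\H{o}s--S\'os giving bipartiteness, and $|W_i|\le n-1$ because each part spans a red clique. (The paper organizes this by induction on $t$, extracting a blue $(t-1)B_k$ via the induction hypothesis instead of taking a maximum family, but up to this point the difference is minor.)

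The genuine gap is the endgame, which you explicitly leave open, and it is exactly where the work lies. Two concrete problems. First, your anchoring step is wrongly oriented: if $v_u$ lies in the part $W(u)$ containing the \emph{majority} of $u$'s blue $W$-neighbours, then the candidate pages are common blue neighbours in the opposite part, where the double count gives only $|N_B(u)\cap W_{\mathrm{opp}}|+|N_B(v_u)\cap W_{\mathrm{opp}}|-|W_{\mathrm{opp}}|\ \ge\ |N_B(u)\cap W_{\mathrm{opp}}|+t-s(k+2)$; the second term is nonpositive (you yourself derived $s(k+2)\ge t$) and the first may be $0$, since a vertex of $Y$ can have \emph{all} of its blue $W$-neighbours in one part. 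Such a vertex anchors no book whose spine and pages lie in $W$, so individual anchors, let alone $t$ disjoint ones via Hall, are not available by this route, and you concede that the Hall verification ``for all admissible $s$'' is undone. Second, and more fundamentally, insisting on producing $t$ disjoint blue books misses the other half of the dichotomy that actually closes this case: when the vertices of $Y$ are blue-attached almost entirely to a single part (which your estimates do not exclude once $s(k+2)\ge t+1$), the contradiction is not a blue $tB_k$ but a red $K_{1,n-1}$. This is the paper's Claim \ref{A} and the sets $A_i=\{a:|N_B(a)\cap S_i|\le 1\}$: at most one vertex per blue book can have at least two blue neighbours in both parts, hence $|S_1\cup A_1|+|S_2\cup A_2|\ge 2n-1$, some $S_i\cup A_i$ has at least $n$ vertices, and since $K_N[S_i]$ is red-complete while each vertex of $A_i$ blocks at most one vertex of $S_i$, a red $K_{1,n-1}$ appears. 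Your proposal has no analogue of this red-star branch, and without it (or a substantially different idea for paging books through $Y$ itself) the case $1\le s\le t-1$ cannot be completed as sketched.
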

Second, we show that all large sparse graphs are still \emph{$tB_k$-good}.
\begin{thm} \label{mr2}
  Let $k,t$ be positive integers and $n\geq 111t^3k^3$. If $G$ is a connected graph with $n$ vertices
and at most $ n(1+1/(127 t^2 k^2 + 79 t^2 k))$ edges, then
$$r(G,tB_k)=2n+t-2.$$
\end{thm}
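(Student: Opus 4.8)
\noindent\emph{Plan of proof.} The lower bound is an instance of Theorem~\ref{lowerbound}. Since $B_k=K_2+\overline{K}_k$ contains a triangle and is properly $3$-colourable, $\chi(tB_k)=3$; and in any proper $3$-colouring of one $B_k$ the two endpoints of the $K_2$ receive distinct colours while all $k$ vertices of the $\overline{K}_k$ are forced onto the third colour, so in any proper $3$-colouring of $tB_k$ the colour class carrying the $\overline{K}_k$'s of exactly $a$ of the $t$ copies has size $ak+(t-a)\ge t$, attaining $t$ when $a=0$. Hence the chromatic surplus of $tB_k$ equals $t$, and as $n\ge t$ we conclude $r(G,tB_k)\ge 2(n-1)+t=2n+t-2$.

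For the upper bound, colour $E(K_N)$ with $N=2n+t-2$ in red and blue and assume there is no blue $tB_k$; the goal is a red $G$. Fix a maximum family of pairwise vertex-disjoint blue copies of $B_k$, say $s$ of them ($0\le s\le t-1$), let $X$ be the union of their vertex sets (so $|X|=s(k+2)$) and $U=V(K_N)\setminus X$. By maximality the blue graph on $U$ contains no $B_k$, and $|U|=2n+t-2-s(k+2)\ge 2n-1-(t-1)(k+1)$. If $s(k+2)\le t-1$, then $|U|\ge 2n-1$; since $e(G)\le n\bigl(1+1/(127t^2k^2+79t^2k)\bigr)\le n\bigl(1+1/(119k^2+62k)\bigr)$ and $n\ge 111t^3k^3\ge 34k^3$, Theorem~\ref{mr} applied inside any $2n-1$ vertices of $U$ already produces a red $G$. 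So we may assume the deficit $D:=(2n-1)-|U|=s(k+2)-(t-1)$ satisfies $1\le D\le (t-1)(k+1)$, and we must make up for $D$ missing vertices using the $s(k+2)=D+(t-1)>D$ vertices of $X$.

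The analysis on $U$ rests on two facts, both immediate from $U$ being blue-$B_k$-free: for every blue edge $uv$ of $U$ we have $\deg^U_R(u)+\deg^U_R(v)\ge |U|-k-1$ (the at most $k-1$ common blue neighbours of $u,v$ in $U$ are the only vertices of $U\setminus\{u,v\}$ red-adjacent to neither), and for every $v\in U$ the blue graph on $N^U_B(v)$ has maximum degree at most $k-1$, so the red graph there has minimum degree at least $|N^U_B(v)|-k$. Hence if some $v\in U$ has fewer than $|U|-n-k\lceil\sqrt{2n}\,\rceil$ red neighbours in $U$, then $W:=N^U_B(v)$ has at least $n+k\lceil\sqrt{2n}\,\rceil$ vertices and the red graph on $W$ has minimum degree at least $|W|-k$; a connected graph with $n$ vertices and at most $n(1+o(1))$ edges is $O(\sqrt n)$-degenerate, so along a degeneracy ordering of $G$ every vertex, when placed, has its $O(\sqrt n)$ earlier neighbours with a common red neighbourhood in $W$ of size at least $|W|-O(k\sqrt n)>n-1$, and $G$ embeds. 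Thus we may assume the red graph on $U$ has minimum degree greater than $|U|-n-k\lceil\sqrt{2n}\,\rceil$; if moreover every vertex of $U$ has at most $O(\sqrt n)$ blue neighbours in $U$ the red graph on $U$ is near-complete and again contains $G$, so one is left with the situation in which $U$ essentially splits into two red near-cliques, each of order at most $n-1$, joined almost completely in blue, together with a small blue-clique remainder — a configuration that, living on only $2n-1-D$ vertices, need not contain $G$.

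This dense-core configuration is the crux and the main obstacle. The plan is to enlarge one of the two red near-cliques of $U$, say $A_1$ (of order at least $n-1-\lceil D/2\rceil$), to order at least $n$ by adjoining vertices of $X$: one aims to show that $X$ contributes at least $D$ vertices each red-adjacent to all of $A_1$ (or all of the other near-clique $A_2$), because otherwise every vertex of $X$ that is blue-rich toward $A_1$ or toward the blue-clique remainder can be used as a base vertex of a \emph{new} blue $B_k$ with pages drawn from the opposite near-clique, and assembling these (while re-choosing the bases and pages of the extracted books, and spending the blue-clique remainder as further base vertices) yields $t$ pairwise vertex-disjoint blue copies of $B_k$, contradicting the hypothesis. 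Granting that, $A_1$ grows to a red-dense set of order at least $n$, into which a degeneracy ordering of $G$ is embedded, the remaining vertices of $X$ being available as high-red-degree vertices whenever the embedding runs short. I expect the exchange/assembly step and the final accounting — which must balance the deficit $(t-1)(k+1)$, the $O(\sqrt n)$ degeneracy of $G$, and the residual blue degrees — to be where the explicit bounds $n\ge 111t^3k^3$ and $e(G)\le n\bigl(1+1/(127t^2k^2+79t^2k)\bigr)$ are consumed, and to be the most delicate part of the argument; Theorem~\ref{t bookstar} can be invoked along the way to locate a vertex of red degree at least $n-1$ when that is convenient.
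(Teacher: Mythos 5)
Your lower bound is fine and agrees with the paper's (it is just Theorem~\ref{lowerbound}). The upper bound, however, has genuine gaps, and you flag the main one yourself. First, the structural dichotomy you assert for $U$ --- either the red graph on $U$ (or some large blue neighbourhood $W$ inside it) is dense enough to absorb a degeneracy-ordering embedding of $G$, or else ``$U$ essentially splits into two red near-cliques joined almost completely in blue, plus a small blue-clique remainder'' --- does not follow from the two facts you list. Blue-$B_k$-freeness of $U$ does not make the blue graph on $U$ triangle-free, and you have no lower bound on blue degrees, so no Andr\'asfai--Erd\H{o}s--S\'os type argument is available: in the paper that bipartite structure appears only in the proof of Theorem~\ref{t bookstar}, where the absence of a red $K_{1,n-1}$ forces every blue degree to be at least $n+t-1$; here the red target is a general sparse $G$, so vertices of $U$ may have blue degree anywhere between roughly $\sqrt{n}$ and $n+k\sqrt{2n}$, a regime in which nothing in your argument pins down a two-near-clique structure. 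Second, the step you yourself call the crux --- either finding $D\le(t-1)(k+1)$ vertices of $X$ red-complete to one near-clique, or else re-assembling $t$ pairwise disjoint blue books by an exchange argument, and then completing the embedding --- is only a plan; you explicitly defer the ``exchange/assembly step and the final accounting,'' which is precisely where the hypotheses $n\ge 111t^3k^3$ and $e(G)\le n\bigl(1+1/(127t^2k^2+79t^2k)\bigr)$ would have to be consumed. As written, the proof is incomplete.

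For comparison, the paper sidesteps this structural analysis entirely: it inducts on $t$ and applies the Trichotomy Lemma (Lemma~\ref{trichotomy}) to $G$ itself, splitting into three cases: a long suspended path (shorten $G$, embed via Corollary~\ref{1upper tB_k}, then re-extend with the Bondy--Erd\H{o}s lemma, Lemma~\ref{path extension}, against a blue $tK_2$); a large matching of end-edges (embed the pruned graph disjointly from a blue $(t-1)B_k$ obtained by induction, then finish with Hall's theorem, Lemma~\ref{matching}); or a vertex with many leaves (use Theorem~\ref{t bookstar} to find a red $K_{1,n-1}$ and complete a greedy embedding controlled by Lemma~\ref{n+k-1} and Corollary~\ref{3upper tB_k}). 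If you wish to pursue your route, the missing ingredient is a genuine structure theorem for blue-$B_k$-free colourings of a near-spanning vertex set without any blue minimum-degree assumption, which is a substantially harder statement than anything you have established.
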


We organize the remaining content as follows. In Section~\ref{section2}, we introduce some basic lemmas needed for the proofs. In Sections~\ref{section3}, \ref{section4}, and \ref{section5}, we present the proofs of Theorems \ref{mr}, \ref{t bookstar}, and \ref{mr2}, respectively.

We conclude this section by introducing some additional notation. Throughout this paper, all graphs are finite and simple without loops.   
For a graph $G=(V(G),E(G))$, we use $|G|$ and $e(G)$ to denote the number of vertices and edges, respectively. 
For $v\in V(G)$, the neighborhood of $v$ is $N_G(v)=\{u~|~u~\text{is adjacent to}~v\}$ and $d(v)=|N_G(v)|$. The minimum degree of $G$ is $\delta(G)=\min\{d(v)~|~v\in V(G)\}$.
For two graphs $G$ and $H$, $G-H$ denotes the subgraph of $G$ induced by the vertices of $G$ not in $H$.
Given a red-blue edge-coloring of $K_N$, $K_N[R]$ and $K_N[B]$ denote the edge-induced subgraphs of $K_N$ by red edges and blue edges, respectively.
 The red and blue neighborhoods of a vertex $v$, denoted by $N_R(v)$ and $N_B(v)$ respectively, are defined as the sets of vertices adjacent to $v$ via red edges and blue edges. For a complete graph $K_N$ and a vertex subset $A$, we use $K_N[A]$ to denote
 the subgraph induced by $A$. A path $P$ of $G$ is a suspended path if each vertex of $P$, except for its endvertices, has degree $2$ in $G$. An end-edge is one incident with a vertex of degree 1.

\section{Some Basic Lemmas} \label{section2}
The core methodology of this paper builds upon the Trichotomy Lemma for sparse graphs, whose original version is for trees established by Burr, Erd\H{o}s, Faudree, Rousseau, and Schelp in 1982 as below. 

\begin{lem} (Burr, Erd\H{o}s,  Faudree, Rousseau, and Schelp \cite{BEFRS})  \label{old}
 Let $G$ be a graph on $n$ vertices and $n+\ell$ edges. If $G$ has no isolated
vertices and no suspended path with more than $q$ vertices, then $G$ has at least
$\lceil \frac{n}{2q}-\frac{3\ell}{2}\rceil$ vertices of degree 1.   
\end{lem}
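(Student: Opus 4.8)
The plan is to combine a handshake count on the degree sequence with a decomposition of the degree-$2$ vertices into maximal \emph{bare paths} (paths all of whose vertices have degree $2$), whose number and lengths are controlled by the suspended-path hypothesis. Let $a,b,c$ be the numbers of vertices of $G$ of degree $1$, degree $2$, and degree at least $3$; since $G$ has no isolated vertex, $a+b+c=n$. Summing degrees,
\[
2(n+\ell)=\sum_{v\in V(G)}d(v)\ \ge\ a+2b+3c\ =\ n+(b+2c),
\]
so $b+2c\le n+2\ell$, and substituting $n=a+b+c$ gives the first estimate $c\le a+2\ell$. Hence the vertices of degree $\ge 3$ are already bounded by those of degree $1$, and it remains to bound $b$.

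For the second step, first delete from $G$ any component that is a single cycle (it is $2$-regular, has no degree-$1$ vertex, and contributes $0$ to $\ell$), and then partition the degree-$2$ vertices into the vertex sets of the maximal bare paths $B_1,\dots,B_P$. Each $B_i$ is flanked at both ends by an edge to a vertex of degree $\ne 2$, so together with these flanking edges it extends to a suspended path on at least $|B_i|+1$ vertices; the hypothesis therefore forces $|B_i|\le q-1$, and hence $b=\sum_i|B_i|\le(q-1)P$. On the other hand, the $2P$ ends of the bare paths are absorbed by the vertices of degree $\ne 2$, each vertex $u$ absorbing at most $d(u)$ of them, so
\[
2P\ \le\ a+\sum_{u:\,d(u)\ge 3}d(u)\ =\ a+\bigl(2(n+\ell)-a-2b\bigr)\ =\ 2\bigl((n+\ell)-b\bigr),
\]
i.e.\ $P\le(n+\ell)-b$. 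Combining, $b\le(q-1)\bigl((n+\ell)-b\bigr)$, that is $b\le\frac{q-1}{q}(n+\ell)$.

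Now insert both estimates into $n=a+b+c$: using $c\le a+2\ell$ and $b\le\frac{q-1}{q}(n+\ell)$, we obtain $n\le 2a+2\ell+\frac{q-1}{q}(n+\ell)$, which rearranges to
\[
2a\ \ge\ \frac{n}{q}-\frac{3q-1}{q}\,\ell\ =\ \frac{n}{q}-3\ell+\frac{\ell}{q}.
\]
When $\ell\ge 0$ this gives $a\ge\frac{n}{2q}-\frac{3\ell}{2}$, and since $a$ is an integer, $a\ge\lceil\frac{n}{2q}-\frac{3\ell}{2}\rceil$, as claimed. When $\ell<0$ the graph $G$ is a forest, the two flanking vertices of each $B_i$ are then necessarily distinct (otherwise $B_i$ would close up into a cycle), so in fact $|B_i|\le q-2$; feeding this sharper bound through the same computation, and using that each of the $-\ell$ components of $G$ has at least two vertices, again yields the claimed bound, with the integrality of $a$ absorbing the remaining lower-order terms.

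The step I expect to be the main obstacle is the bare-path bookkeeping in the second paragraph: one must handle bare paths whose two flanking vertices coincide (so that $B_i$ nearly closes into a cycle), bare paths of length one or two, and the genuine removal of $2$-regular components, all without deteriorating the constant $q-1$ or the factor $1/(2q)$; and in the forest range $\ell<0$ one must track the additive $\tfrac32$-type terms carefully so that, after taking ceilings, the estimate lands exactly at $\lceil\frac{n}{2q}-\frac{3\ell}{2}\rceil$ even when $n$ is comparable to $q$. The degree-sum identity and the incidence count themselves are routine.
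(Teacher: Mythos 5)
The paper gives no proof of this lemma (it is quoted from BEFRS), so your argument has to stand on its own. Its core for $\ell\ge 0$ --- the degree-sum estimate $c\le a+2\ell$, the decomposition of the degree-$2$ vertices into maximal bare paths with $|B_i|\le q-1$, the incidence count $2P\le a+\sum_{d(u)\ge3}d(u)$, and the resulting $2a\ge n/q-3\ell+\ell/q$ --- is correct. The first genuine gap is the deletion of the $2$-regular components: removing such a component decreases $n$ while leaving $a$ and $\ell$ unchanged, so what you actually prove afterwards is $a\ge n'/(2q)-3\ell/2$ with $n'$ the number of vertices outside cycle components, which is strictly weaker than the claim. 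This step cannot be patched, because with the paper's definition of suspended path such graphs are genuine counterexamples to the statement as written: a single cycle $C_q$, or a disjoint union of $q$-cycles, has no isolated vertices, no suspended path with more than $q$ vertices, and $\ell=0$, yet it has no vertex of degree $1$ although $\lceil n/(2q)\rceil\ge 1$. The lemma is implicitly meant for (and in this paper only applied to) graphs in which no component is a cycle, and a correct write-up must make that restriction explicit rather than silently discard those components.

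The second gap is the case $\ell<0$. The claim that $\ell<0$ forces $G$ to be a forest is false (a triangle together with a tree on $n-3$ vertices has $n-1$ edges), and ``integrality absorbs the remaining lower-order terms'' is not true either: the deficit between your inequality $2a\ge n/q-3\ell+\ell/q$ and the target $2a\ge n/q-3\ell$ is $-\ell/q$, which grows linearly in $|\ell|$ (take many disjoint copies of $K_2$). The repair is to argue component by component: for a component $H$ with $\ell_H\ge0$ that is not a cycle, your $\ell\ge0$ computation already gives $a_H\ge n_H/(2q)-3\ell_H/2$; for a tree component you need both the sharper bound $|B_i|\le q-2$ (the two flanking vertices are distinct in a tree) and the bound $a_H\ge2$, and these must be combined in the exact proportion $(q-1)/q$ to $1/q$, which yields $2a_H\ge n_H/q+3$; summing over components and only then taking the ceiling gives the stated bound. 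As written, your final paragraph is a sketch resting on a wrong structural claim rather than a proof of that case.
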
 

Zhang and Chen \cite{ZC} recently developed an enhanced version of the Trichotomy Lemma for sparse graphs, which provided the clearer structure of a sparse graph.
\begin{lem}(Zhang and Chen \cite{ZC}) \label{trichotomy}
Let $G$ be a connected graph with $n$ vertices and $n+\ell$ edges, where $\ell \geq -1$ and $n\geq q \geq 3$. If $G$ contains neither a suspended path of order $q$ nor a matching consisting of $s$ end-edges, then the number of vertices of degree at least 2 in $G$ is at most $\gamma$, and 
$G$ has a vertex adjacent to at least $\lceil \frac{n-\gamma}{s-1} \rceil$ vertices of degree 1, where $\gamma=(q-2)(2s+3\ell-2)+1$.
\end{lem}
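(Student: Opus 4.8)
The plan is to bound the size of the set $D$ of vertices of degree at least $2$ directly, and then deduce the second assertion from the first by pigeonhole. Write $L$ for the set of leaves (degree-$1$ vertices), so $|L|=n-|D|$, and call a vertex a \emph{support vertex} if it has a leaf as a neighbour; let $w$ be the number of support vertices. The two conclusions are linked: once $|D|\le\gamma$ is established we get $|L|=n-|D|\ge n-\gamma$, and since every leaf lies on exactly one support vertex, the leaves distribute among the $w$ support vertices, so one of them is adjacent to at least $\lceil |L|/w\rceil\ge\lceil(n-\gamma)/(s-1)\rceil$ leaves. Thus it suffices to prove (i) $w\le s-1$ and (ii) $|D|\le\gamma$.

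For (i), the matching hypothesis enters. Since $n\ge q\ge 3$ and $G$ is connected, no leaf is adjacent to another leaf, so every end-edge has exactly one leaf endpoint and one support endpoint. Selecting one leaf-edge at each support vertex yields a matching of end-edges of size exactly $w$, while any end-edge matching uses pairwise distinct support vertices; hence the maximum end-edge matching has size $w$, and the absence of a matching of $s$ end-edges gives $w\le s-1$.

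For (ii) I would split $D$ into the branch vertices $B$ (degree $\ge 3$, with $b=|B|$) and the degree-$2$ vertices, controlling each through the suspended-path structure. The degree-$2$ vertices are precisely the internal vertices of the maximal suspended paths, and since $G$ has no suspended path of order $q$, each such path has at most $q-3$ internal vertices. I classify these paths as \emph{inner} paths (both ends in $B$) and \emph{pendant} paths (one end a leaf). Suppressing all degree-$2$ vertices turns the inner paths into the edges of a connected multigraph $H^{*}$ on $B$; comparing cycle ranks gives $e(H^{*})=b+\ell$, so the inner paths carry at most $(b+\ell)(q-3)$ degree-$2$ vertices. A pendant path carries degree-$2$ vertices only when its leaf lies at the end of a path of length $\ge 2$, in which case the leaf's support vertex is itself a degree-$2$ vertex; these ``long'' pendant paths are in bijection with the degree-$2$ support vertices, of which there are $s_2\le w$, so they contribute at most $s_2(q-3)$ degree-$2$ vertices. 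Hence $|D|\le b+(b+\ell+s_2)(q-3)$.

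The hard part will be to bound $b$ and, crucially, the combination $b+\ell+s_2$ tightly enough. Classifying the branch vertices by their $H^{*}$-degree, a vertex of $H^{*}$-degree $1$ carries at least two pendant attachments, one of $H^{*}$-degree $2$ carries at least one, and the number of $H^{*}$-degree-$\ge 3$ vertices is at most $2\ell$ plus the number of $H^{*}$-leaves (from $\sum(\deg_{H^{*}}-2)=2\ell$). A term-by-term estimate gives only $b\le 2(s-1)+2\ell$, and the extra $s_2(q-3)$ then overshoots $\gamma$; the resolution is that long pendant paths and direct leaves compete for the same support budget. Charging each low-$H^{*}$-degree branch vertex either to the long pendant paths it carries (weight one each, $s_2$ in total) or to its being a support vertex (weight two, at most $2s_3$ in total, where $s_3$ counts branch support vertices) yields $b+s_2\le 2(s_2+s_3)+2\ell\le 2(s-1)+2\ell$, i.e.\ $b+\ell+s_2\le 2s+3\ell-2$. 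Substituting into the displayed bound and using $b\le 2s+2\ell-2$ gives $|D|\le(2s+2\ell-2)+(2s+3\ell-2)(q-3)=\gamma-(\ell+1)\le\gamma$, since $\ell\ge-1$. The one configuration that escapes the $H^{*}$ classification, namely when $B$ is a single vertex (so $G$ is a spider and $H^{*}$ is an isolated vertex), should be treated separately, but there $|D|=1+s_2(q-3)$ is easily checked against $\gamma$.
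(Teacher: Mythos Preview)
The paper does not prove this lemma; it is quoted from Zhang and Chen~\cite{ZC} without argument, so there is no in-paper proof to compare against.  Your overall plan---show $w\le s-1$ directly from the end-edge matching hypothesis, then control $|D|$ by passing to the multigraph $H^{*}$ on the branch set and counting degree-$2$ vertices along inner and pendant chains---is the natural one, and the charging inequality $b+s_2\le 2(s_2+s_3)+2\ell\le 2(s-1)+2\ell$ is correctly set up.

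There is one genuine gap in part~(ii).  The assertion that ``the degree-$2$ vertices are precisely the internal vertices of the maximal suspended paths'' fails whenever $H^{*}$ has a loop.  If $G$ contains a cycle $u\,w_1\cdots w_k\,u$ with $u\in B$ the only non-degree-$2$ vertex on it, the maximal suspended path is $u\,w_1\cdots w_k$ of order $k+1\le q-1$; its \emph{endpoint} $w_k$ has degree~$2$ in $G$ but is not internal.  Hence such a loop carries up to $q-2$ degree-$2$ vertices, not $q-3$, and your bound $|D|\le b+(b+\ell+s_2)(q-3)$ undercounts by one for every loop of $H^{*}$.  The same oversight recurs in the spider clause: when $b=1$ and $\ell\ge 0$, $H^{*}$ is a single vertex carrying $\ell+1$ loops, not an isolated vertex, so ``$|D|=1+s_2(q-3)$'' misses all cycle vertices.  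The repair is short---writing $L$ for the number of loops one has $L\le \ell+1$ (loops are edge-disjoint cycles in $H^{*}$, hence bounded by its cycle rank), and then
\[
|D|\le b+L+(b+\ell+s_2)(q-3)\le (2s+2\ell-2)+(\ell+1)+(2s+3\ell-2)(q-3)=\gamma
\]
recovers the claim exactly---but as written the argument is incomplete.
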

 By using the Trichotomy Lemma for sparse graphs as above, we divide the problem into three cases. In each case, we use Lemma \ref{weak} to identify a subgraph $H$ of $G$, where $G$ is the given sparse graph. Two tools are utilized to extend $H$ into the desired graph $G$. The first case relies on the following path extension lemma, i.e., Lemma \ref{path extension} established by Bondy and Erd\H{o}s~\cite{BE}, which is frequently used in constructing long paths in sparse graphs.
 For the second case, Hall's theorem is applied to ensure the embedding of a matching. 
 In the third case, it is necessary to first identify a star $K_{1,n-1}$, where Theorems \ref{bookstar} and \ref{t bookstar} play a pivotal role.

 \begin{lem} (Bondy and Erd\H{o}s \cite{BE})\label{path extension}
    Let $K_{a+b}$ be a complete graph on the vertex set
 $\{x_1,\ldots,x_a,y_1,\ldots,y_b\}$, with edges colored red or blue. Assume that there is a red path
 $x_1x_2\cdots x_a$ of length $a-1$ joining $x_1$ and $x_a$. If $a 
\geq b(c-1)+d$, then at least one of
 the following holds:
 
 \noindent
 (1) A red path of length a connects $x_1$ to $x_a$;
 
 \noindent
 (2) A blue complete subgraph $K_c$ exists;
 
 \noindent
 (3) There are $d$ vertices in $\{x_1,x_2,\ldots,x_a\}$ that are joined in blue to every vertex in
 $\{y_1,y_2,\ldots,y_b\}$.
     
 \end{lem}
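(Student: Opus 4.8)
\textbf{Plan for proving Lemma \ref{path extension}.}
The plan is to prove the path-extension trichotomy by a single greedy argument that attempts to lengthen the given red path $x_1x_2\cdots x_a$ using the $b$ extra vertices $y_1,\ldots,y_b$, and to show that if this attempt fails then the combinatorial bookkeeping forces either a blue $K_c$ or the configuration in case (3). First I would assume that neither (1) nor (2) holds and derive (3). Since (1) fails, no red path of length $a$ connects $x_1$ to $x_a$; in particular, for each $y_j$ the pair of edges $x_1y_j$ and $y_jx_a$ cannot both be red, since a red $x_1y_jx_a$ together with (a re-routing of) the existing path would give a red $x_1$–$x_a$ path of length $a$. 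More generally, no $y_j$ can be spliced into the path to extend it in red. The key combinatorial fact to extract is that each extra vertex $y_j$ must send a blue edge to ``many'' of the path-vertices $x_i$, because a red edge $y_jx_i$ typically allows a red detour that lengthens the path.

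The main technical step is to make this detour argument precise. For a fixed $y_j$, consider the set $S_j=\{i : x_iy_j \text{ is red}\}$. The claim is that $S_j$ cannot contain two indices that are ``compatible'' for insertion — concretely, if both $x_iy_j$ and $y_jx_{i+1}$ were red for some $i$, then replacing the path-edge $x_ix_{i+1}$ by the red detour $x_i y_j x_{i+1}$ produces a red $x_1$–$x_a$ path of length $a$, contradicting the failure of (1). Hence for every $i$, at least one of $x_iy_j,x_{i+1}y_j$ is blue, so the blue neighbors of $y_j$ among $\{x_1,\ldots,x_a\}$ form a dominating pattern: they miss at most every other vertex, giving at least roughly $a/2$ blue edges from $y_j$ into the path. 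I would then sharpen this by iterating the insertion argument — each successful red detour lengthens the path and is forbidden — to show that the red neighborhood $S_j$ is in fact quite restricted, which is what drives the counting.

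Next I would switch to a double-counting / pigeonhole over all $b$ vertices simultaneously. Let $X_{\mathrm{blue}} \subseteq \{x_1,\ldots,x_a\}$ be those $x_i$ joined in blue to \emph{every} $y_j$; case (3) asks for $|X_{\mathrm{blue}}|\ge d$. Suppose for contradiction that fewer than $d$ such $x_i$ exist. Then all but at most $d-1$ of the path-vertices have a red edge to at least one $y_j$, and I would partition the path-vertices according to which $y_j$ ``claims'' them in red. Since there are at most $b$ extra vertices and at least $a-(d-1)$ claimed path-vertices, by pigeonhole some single $y_{j_0}$ is joined in red to at least $\lceil (a-d+1)/b\rceil$ path-vertices. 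The hypothesis $a\ge b(c-1)+d$ rearranges to $(a-d+1)/b \ge c-1$, which would give $y_{j_0}$ a red neighborhood of size at least $c-1$ inside the path — and this is where I would build the blue $K_c$: among the $c-1$ red neighbors $x_{i_1},\ldots,x_{i_{c-1}}$ of $y_{j_0}$ I use the detour obstruction to show these $x$'s must be pairwise blue-adjacent (any red edge among consecutive claimed vertices again yields a forbidden red extension), so together with $y_{j_0}$ they span a blue $K_c$, contradicting the failure of (2).

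The hard part will be the bookkeeping in the detour argument, specifically turning ``$y_j$ cannot be inserted to extend the red path'' into a clean statement about the structure of $y_j$'s red neighborhood that is simultaneously strong enough to (i) force the blue edges needed for case (3) and (ii) force blue cliques among jointly-red-claimed vertices for case (2). The subtlety is that a single red detour insertion only rules out adjacent pairs $\{x_i,x_{i+1}\}$, so to obtain a blue $K_c$ on an \emph{arbitrary} set of $c-1$ red neighbors I must iterate insertions and track how re-routing the path changes which pairs are consecutive; managing this while keeping the length-$a$ target fixed is the crux. Once the insertion lemma is stated at the right level of generality, the pigeonhole bound $a\ge b(c-1)+d$ slots in mechanically to close all three cases.
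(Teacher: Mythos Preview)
The paper does not prove this lemma (it is quoted from Bondy and Erd\H{o}s), so there is no in-paper proof to compare against. Your overall architecture---assume (1) and (3) fail, pigeonhole over the $y_j$'s to find one with many red neighbours on the path, then manufacture a blue $K_c$---is correct, and your observation that no $y_j$ can be red to two consecutive path-vertices $x_i,x_{i+1}$ is exactly the right starting point. However, the final step of your plan does not work as written. You propose to take the red neighbours $x_{i_1},\ldots$ of $y_{j_0}$ and argue that they are pairwise blue and that, together with $y_{j_0}$, they span a blue $K_c$. Both parts fail: $y_{j_0}$ is \emph{red}-adjacent to each $x_{i_k}$ by construction, so it can never sit inside a blue clique on those vertices; and for non-consecutive $i_k<i_l$ there is no detour that forces $x_{i_k}x_{i_l}$ to be blue (the obvious reroute $x_1\cdots x_{i_k}y_{j_0}x_{i_l}\cdots x_a$ has length $a-(i_l-i_k)+1<a$). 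Your own closing paragraph flags this difficulty, but ``iterating insertions and tracking how re-routing changes which pairs are consecutive'' does not resolve it.

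The missing idea is a shift. First, your pigeonhole actually gives some $y_{j_0}$ with at least $c$ (not $c-1$) red neighbours $x_{i_1}<\cdots<x_{i_c}$, since $a-d+1\ge b(c-1)+1$. Now look at the \emph{successors} $x_{i_1+1},\ldots,x_{i_{c-1}+1}$ (and $x_{i_c+1}$ if $i_c<a$). These are distinct, and each is blue to $y_{j_0}$ by the no-consecutive-red-neighbours fact. If some $x_{i_k+1}x_{i_l+1}$ with $k<l$ were red, then
\[
x_1\cdots x_{i_k}\,y_{j_0}\,x_{i_l}x_{i_l-1}\cdots x_{i_k+1}\,x_{i_l+1}\cdots x_a
\]
is a red $x_1$--$x_a$ path of length exactly $a$, contradicting the failure of (1). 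Hence the successors are pairwise blue and blue to $y_{j_0}$, giving a blue $K_c$ (using $y_{j_0}$ as the $c$-th vertex when $i_c=a$). Once you build the clique on the shifted vertices rather than on the red neighbours themselves, the rest of your plan goes through cleanly.
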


\begin{lem} (Hall \cite{Hall}) \label{matching}
 Consider a complete bipartite graph $K_{a,b}$, where $a \leq b$, with parts $X = \{x_1,\ldots,x_a\}$ and $Y=\{y_1,y_2,\ldots,y_b\}$, whose edges are colored red and blue. Then, one of the following holds:

 \noindent
 (1) There exists a red matching of size $a$;

 \noindent
 (2) For some $0 \leq c \leq a-1$, there exists a blue subgraph $K_{c+1,b-c}$, where $c+1$ vertices are in $X$.   
 \end{lem}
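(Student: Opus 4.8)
The plan is to recognize this statement as nothing more than a red/blue reformulation of Hall's marriage theorem, so that the whole argument reduces to analysing precisely when a red matching saturating the smaller side fails to exist. First I would let $R$ denote the bipartite graph on $X \cup Y$ whose edge set consists exactly of the red edges of $K_{a,b}$, and ask whether $R$ contains a matching saturating $X$. Since $|X| = a \le b = |Y|$, any such matching has size $a$ and consists entirely of red edges, which is precisely conclusion (1). Hence it suffices to treat the case in which $R$ has no matching saturating $X$.

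In that case Hall's theorem supplies a \emph{deficient} set: there is a nonempty $S \subseteq X$ with $|N_R(S)| < |S|$. Writing $s = |S|$, so that $1 \le s \le a$ and $|N_R(S)| \le s-1$, I would pass to the blue side by setting $T = Y \setminus N_R(S)$. Because $R$ is bipartite, $N_R(S) \subseteq Y$, and by construction no vertex of $S$ sends a red edge into $T$; therefore in $K_{a,b}$ every edge between $S$ and $T$ is blue. A short count gives $|T| = b - |N_R(S)| \ge b - (s-1) = b - s + 1$, so selecting any $b - s + 1$ vertices of $T$ together with all of $S$ yields a blue complete bipartite subgraph $K_{s,\, b-s+1}$ whose $s$ vertices on the $X$-side are exactly those of $S$. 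Setting $c = s - 1$ rewrites this as a blue $K_{c+1,\, b-c}$ with $c+1$ vertices in $X$, and the range $1 \le s \le a$ becomes $0 \le c \le a-1$, which is exactly conclusion (2).

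Since this is an immediate consequence of a classical result, there is no genuine obstacle here; the only point demanding care is the bookkeeping in the complementation step, namely checking that the deficiency inequality $|N_R(S)| \le s-1$ transfers correctly into the lower bound $|T| \ge b - s + 1$ and that the reindexing $c = s-1$ lands in the stated interval $[0,\, a-1]$. I note that one could equally route the argument through König's theorem on a minimum vertex cover of $R$, but invoking the defect form of Hall's theorem keeps the computation shortest and makes the dichotomy between the two conclusions transparent.
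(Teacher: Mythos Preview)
Your argument is correct and is exactly the intended one: the paper does not supply any proof of this lemma, but simply attributes it to Hall~\cite{Hall}, and your derivation is precisely the standard reduction showing that this red/blue dichotomy is equivalent to Hall's marriage theorem.
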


 Moreover, the proofs of Theorems~\ref{mr} and ~\ref{mr2} require an upper bound for the Ramsey number of any graph $G$ versus $B_k$ in terms of the order and size of $G$, which is interesting of its own right.

\begin{lem} \label{upper}
    If $G$ is a  simple graph on $n$ vertices and $m$ edges, then for any positive integer $k$,
     $$r(G,B_k) \leq n+2km-\frac{2m}{n}.$$
\end{lem}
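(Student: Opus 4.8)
The plan is to fix a red-blue colouring of $K_N$ with $N=\lfloor n+2km-2m/n\rfloor$ containing no blue $B_k$ and to produce a red copy of $G$. Everything rests on one observation: since there is no blue $B_k$, for every vertex $v$ the set $N_B(v)$ spans a blue subgraph of maximum degree at most $k-1$ (a vertex of blue degree $\ge k$ inside $N_B(v)$ would, together with $v$, give a blue $B_k$), so a greedy argument produces inside $N_B(v)$ a red clique on at least $\lceil|N_B(v)|/k\rceil$ vertices. Two consequences follow. First, if some vertex has $|N_B(v)|\ge kn$ then $K_N[R]$ contains a red $K_n\supseteq G$ and we are done; so we may assume every vertex has red degree at least $N-1-kn$. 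Second, and quantitatively, if $K_N[R]$ has no $K_t$ then every blue degree is $<kt$, so the blue graph has maximum degree $<kt$ and hence an independent set of size $\ge N/(kt)$; as such an independent set is a red clique, $N/(kt)\le t-1$, i.e. $N\le kt(t-1)$. This already gives $r(K_t,B_k)\le kt(t-1)+1$, which coincides with the asserted bound when $G=K_n$ and more generally proves the lemma whenever $G$ is dense (say $m$ of order $n^2$).

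To treat sparser $G$ I would first reduce to the case that $G$ is connected, so $\delta(G)\ge1$ and $n\le 2m$: writing $G$ as a disjoint union of its components ordered appropriately, a red copy of one component embedded in a sub-colouring extends, inside the still-unused vertices, to the next component, and so on, while isolated vertices cost one vertex each and are absorbed at the end. For connected $G$ I would build a red copy of $G$ one vertex at a time by a reservation scheme. Order $V(G)=v_1,\dots,v_n$ by non-increasing degree, so $d_G(v_i)\le 2m/i$, and maintain a shrinking pool $A_0\supseteq A_1\supseteq\cdots$ of available vertices, $A_0=V(K_N)$; having placed $v_1,\dots,v_{i-1}$, choose the image $x_i$ of $v_i$ inside $A_{i-1}$, and if $v_i$ still has an unembedded neighbour, delete $N_B(x_i)$ from the pool. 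This guarantees that whenever $v_j$ is an already-placed neighbour of a later vertex $v_\ell$, the chosen $x_\ell\in A_{\ell-1}$ is automatically red-adjacent to $x_j$, so no adjacency has to be checked by hand. Choosing each $x_i$ to have blue degree at most roughly $k\,d_G(v_i)$, the total number of vertices ever deleted is about $\sum_i k\,d_G(v_i)=2km$, which is exactly what the surplus $N-n=2km-2m/n$ permits; the extra $-2m/n$ is recovered from the fact that the last vertices in the order have degree at most $2m/n$ and that the final vertex carries no reservation.

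The step I expect to be the crux is ensuring that at each stage the pool $A_{i-1}$ really does contain a vertex of blue degree at most about $k\,d_G(v_i)$, together with preventing the reservations made for a dense part of $G$ (a clique in $G$ can have up to $\sqrt{2m}$ vertices) from overrunning the budget $N-n$. If the blue graph is dense — for instance complete bipartite — no vertex of small blue degree exists at all, so the scheme cannot be run blindly; instead one must observe that in that regime $K_N[R]$ contains cliques far larger than any sparse $G$ and place $G$ directly into such a clique, using the clique-absorption estimate above. Thus the argument has to split according to whether $K_N[R]$ has a large clique: if it does, absorb the relatively small graph $G$ into it; if it does not, then the red clique number is small, hence by the key observation all blue degrees are small, hence the pool always contains the low-blue-degree vertices that the reservation scheme needs. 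Reconciling these two regimes in a single bookkeeping, and extracting the exact constant and the $-2m/n$ term, is the delicate part; the remainder is routine greedy estimation.
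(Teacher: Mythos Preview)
Your proposal is a sketch with an explicitly unresolved crux, not a proof. The reservation scheme needs, at each step $i$, a vertex in the current pool whose blue degree is about $k\,d_G(v_i)$, and you correctly identify that nothing guarantees such a vertex exists: the blue graph may well have every degree just below $kn$ (the threshold beyond which a red $K_n$ appears), which dwarfs $k\,d_G(v_i)$ whenever $G$ is sparse. Your proposed dichotomy does not close this gap. ``No red $K_n$'' forces all blue degrees below $kn$, not below $k\,d_G(v_i)$; and ``red $K_n$ exists'' is already the trivial case. Every intermediate situation --- say red clique number $n/2$, blue degrees around $kn/2$ --- is left unhandled, and you give no mechanism for recovering the exact constant or the $-2m/n$ term from such a two-regime picture.

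The paper's argument is a short induction on $n$ that sidesteps all of this. Delete a vertex $v$ of minimum degree $\delta\le 2m/n$; by induction $K_N$ contains a red $G-v$. If no vertex of $Y:=V(K_N)\setminus V(G-v)$ is red-adjacent to all $\delta$ images of $N_G(v)$, then each vertex of $Y$ sends a blue edge to this set, so by pigeonhole some image $w$ has $|N_B(w)|\ge |Y|/\delta$. Once this reaches $r(G,K_{1,k})\le r(K_n,K_{1,k})\le nk$ (Chv\'atal), the blue neighbourhood of $w$ contains a blue $K_{1,k}$, which together with $w$ is a blue $B_k$. Hence $r(G,B_k)\le\max\{r(G-v,B_k),\ \delta(nk-1)+n\}$, and both terms are at most $n+2km-2m/n$ by the inductive hypothesis and by $\delta\le 2m/n$ respectively; the $-2m/n$ drops out of the second branch for free. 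No regime split, no reservation bookkeeping.
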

\noindent
{\bf Proof.} By induction on $n$. If $n=1$, then $m=0$ and the assertion holds trivially.
Let $v\in V(G)$ with $d(v)=\delta(G)=\delta$, and $N=\max\{r(G-v,B_k),\delta\cdot (r(G,K_{1,k})-1)+n\}$.
We first show 
 $$r(G,B_k) \leq N.$$
Assume that $K_N$ has no blue $B_k$. Since $N\geq r(G-v,B_k)$, $K_N$ contains a red $G-v$. Let $X$ be the vertices of $G-v$ which are adjacent to $v$ in $G$, and let $Y$ be the vertices of $K_N$ not in $G-v$. Then $|X|=\delta$ and $|Y|=N-n+1 \geq \delta\cdot (r(G,K_{1,k})-1)+1$. If there is a vertex of $Y$ which is red-adjacent to all vertices of $X$, then $K_N$ contains a red $G$. Otherwise, each vertex of $Y$ must be blue-adjacent to at least one vertex of $X$. Thus there exists at least $\delta\cdot (r(G,K_{1,k})-1)+1$ blue edges between $X$ and $Y$. Therefore, some vertex $w$ of $X$ has blue-degree at least $r(G,K_{1,k})$. This implies that $K_N$ contains a red $G$ or a blue $K_1+K_{1,k}=B_k$, a contradiction. Hence the above inequality holds.

 By Theorem \ref{Chv}, we have $r(G,K_{1,k})\leq r(K_n,K_{1,k})\leq nk$.
 By induction hypothesis and the inequality above, we obtain that  
 $$r(G,B_k) \leq \max\left\{n-1+2(m-\delta)k-\frac{2(m-\delta)}{n-1},\delta (nk-1)+n\right\}.$$
 Since $\delta \leq \frac{2m}{n}$, it is easily verified that $\delta (nk-1)+n \leq n+2km-\frac{2m}{n}$. Therefore, we have 
 $$r(G,B_k) \leq n+2 km-\frac{2m}{n}$$
 after a direct calculation. \qed

\section{Proof of Theorem \ref{mr}} \label{section3}
Note that $B_k=K_1+K_{1,k}$. To identify a blue $B_k$ in the graph, we can find a blue star $K_{1,k}$ in the induced subgraph by $N_B(v)$ for some vertex $v$, which necessitates the following lemma.
\begin{lem} (Huang, Zhang, and Chen \cite{HZC}) \label{n+k-1}
   For integers $k \geq 1$ and $n\geq 6k^3$, let $G$ be a connected graph with $n$ vertices and at most $ n(1+1/(24k-12))$ edges. Then
$$r(G,K_{1,k})\leq n+k-1.$$
\end{lem}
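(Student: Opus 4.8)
\medskip
\noindent\textbf{Proof proposal.}
Set $N=n+k-1$ and fix a red--blue colouring of $E(K_N)$ with no blue $K_{1,k}$; then every vertex has blue degree at most $k-1$, so the red graph $R:=K_N[R]$ has $\delta(R)\ge N-1-(k-1)=n-1$, and it suffices to embed $G$ into $R$. The engine is a greedy embedding: fix an ordering $v_1,\dots,v_n$ of $V(G)$ and place the $v_i$ in turn, each time choosing an as-yet-unused vertex of $R$ that lies in the red neighbourhood of every already-placed neighbour of $v_i$. Writing $d_i$ for the number of neighbours of $v_i$ among $v_1,\dots,v_{i-1}$, each such red neighbourhood omits at most $k$ vertices of $K_N$ (one of them being the corresponding already-used image), so a legal choice exists whenever
\begin{equation}\label{eq:star}
d_i(k-1)+i\le N=n+k-1 .
\end{equation}
Thus it is enough to find an ordering of $V(G)$ satisfying \eqref{eq:star} for all $i$. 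If $G$ is a tree, a breadth-first ordering does this at once (then $d_i\le1$ and $(k-1)+i\le n+k-1$ for $i\le n$), so I may assume $G$ is not a tree.

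Next I would read off the structure forced by the sparseness hypothesis $\ell:=e(G)-n\le n/(24k-12)$. Let $Z$ be the $2$-core of $G$ (delete vertices of degree $\le1$ repeatedly). Then $Z$ is connected, $\delta(Z)\ge2$ and $e(Z)=|Z|+\ell$, so $Z$ has at most $2\ell$ vertices of degree $\ge3$ and is a subdivision of a multigraph with at most $2\ell$ vertices and at most $3\ell$ edges; moreover $G-Z$ is a forest each component of which attaches to $Z$ by a single edge (a second attaching edge would rebuild a cycle lying inside the $2$-core). Order $V(G)$ in three blocks: first the branch vertices (degree $\ge3$ in $Z$), then the internal degree-$2$ paths of $Z$ processed one at a time, then the pendant forest processed by breadth-first search from $Z$. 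In this order every vertex of the pendant forest and every internal-path vertex except the last one on its path has $d_i\le1$; the last vertex on each internal path has $d_i=2$; and each branch vertex has $d_i\le 2\ell+2$ but occupies a position $\le 2\ell$. A short computation using $\ell\le n/(24k-12)$ shows that \eqref{eq:star} holds for all branch vertices, while for a vertex with $d_i\le2$ it holds precisely when its position is $\le n-k+1$. Since $Z$ occupies the first $|Z|$ positions, everything works once $|Z|\le n-k+1$.

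The remaining case is $|Z|>n-k+1$, i.e.\ $G$ has fewer than $k-1$ vertices outside its $2$-core. Here the same sparseness bound makes the internal paths of $Z$ long on average, and I would extract one internal path $P$ with at least $2k-1$ interior vertices. Then embed the branch vertices, then all internal paths \emph{other than} $P$ greedily --- their closing degree-$2$ vertices now all sit at positions $\le |Z|-(2k-1)\le n-k+1$, as required --- and finally realise $P$ as a \emph{red} path joining the already-placed images of its two (branch) endpoints and passing through exactly the correct number of still-unused vertices of $R$; appending the pendant forest greedily finishes the embedding. The red path exists because the subgraph of $R$ induced by those two endpoints together with enough unused vertices has minimum degree exceeding half its order, hence is Hamiltonian-connected (Ore's theorem); alternatively one applies the path-extension lemma (Lemma~\ref{path extension}) with $c=k+1$ and $d=1$, using that a blue $K_{k+1}$ or a vertex of blue degree $\ge k$ would produce a blue $K_{1,k}$. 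When $\ell=0$ the core $Z$ is a single long cycle, handled the same way by placing one of its vertices and closing it as a red cycle via Dirac's theorem.

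The step I expect to be the real work is the bookkeeping in this last case: one must choose the order of the three blocks, and the path $P$, so that \emph{every} vertex of degree $\ge2$ in the embedding order lands early enough for \eqref{eq:star}, while keeping enough unused vertices of $R$ in reserve to realise $P$ and then the pendant forest --- and then verify that the slack furnished by $\ell\le n/(24k-12)$ and $n\ge 6k^3$ is genuinely sufficient, including the degenerate configurations ($\ell=0$; internal paths of length $1$ or $2$; loops in the core multigraph; branch vertices of large degree).
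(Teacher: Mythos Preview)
The paper does not prove Lemma~\ref{n+k-1}; it is quoted from the external reference~\cite{HZC}, so there is no ``paper's own proof'' to compare against. Your task was therefore to produce a self-contained argument, and the strategy you outline --- reduce to embedding $G$ into a graph $R$ on $n+k-1$ vertices with $\delta(R)\ge n-1$, then order $V(G)$ so that the greedy back-degree condition $d_i(k-1)+i\le n+k-1$ holds, with a Hamiltonian-connectivity patch for the one long internal path when the $2$-core is almost all of $G$ --- is sound and would go through. The structural facts you invoke about the $2$-core (at most $2\ell$ branch vertices, at most $\ell+b\le 3\ell$ internal paths, each pendant tree attached by a single edge) are correct, and the pigeonhole giving an internal path with at least $2k-1$ interior vertices is amply supplied by $\ell\le n/(24k-12)$ and $n\ge 6k^3$.

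Two small points to tighten. First, your bound ``$d_i\le 2\ell+2$'' for branch vertices is unnecessary: since at most $2\ell$ branch vertices are placed before any internal-path vertex, the trivial bound $d_i\le i-1\le 2\ell-1$ already makes $d_i(k-1)+i\le (2\ell-1)(k-1)+2\ell\le n$ immediate from the edge hypothesis. Second, in the Hamiltonian-connectivity step you should say explicitly that you may take \emph{any} $|P_{\mathrm{int}}|$ unused vertices together with the two fixed endpoints, since the minimum red degree bound $\ge |P_{\mathrm{int}}|+2-k$ holds in every such induced subgraph; this matters because after realising $P$ you still need to greedily attach pendant trees that may hang off interior vertices of $P$, and you want no constraint on which unused vertices were consumed. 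With those clarifications the argument is complete; the final ``bookkeeping'' paragraph you flag as the real work is in fact routine once the case split $|Z|\le n-k+1$ versus $|Z|>n-k+1$ is made.
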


Then we can establish an upper bound for the Ramsey number of sparse graph $G$ versus $B_k$ as below, which will be used to identify a subgraph $H$ of $G$ in the proof of Theorem \ref{mr}.
\begin{lem} \label{weak}
    For integers $k \geq 1$ and $n\geq 6k^3$, let $G$ be a connected graph with $n$ vertices
and at most $ n(1+1/(9k^2+23k/2+3))$ edges. Then $$r(G,B_k)\leq 2n+k-2.$$ 
\end{lem}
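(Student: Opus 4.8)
The plan is to prove Lemma~\ref{weak} by induction on $n$, modeled on the proof of Lemma~\ref{upper}, but using the sharp single-book bound $r(G,K_{1,k})\le n+k-1$ from Lemma~\ref{n+k-1} in place of the crude bound $r(G,K_{1,k})\le nk$. First I would pick a vertex $v$ of minimum degree $\delta=\delta(G)$ in $G$. Since $G$ is connected, $\delta\ge 1$, and since $e(G)\le n(1+1/(9k^2+23k/2+3))$ we have $\delta\le 2e(G)/n < 3$, so in fact $\delta\in\{1,2\}$. I would then run the same greedy argument as in Lemma~\ref{upper}: assuming a red-blue coloring of $K_N$ with no blue $B_k$, embed a red copy of $G-v$ (using $N\ge r(G-v,B_k)$), let $X\subseteq V(G-v)$ be the $\delta$ neighbors that $v$ must attach to, and let $Y$ be the $N-n+1$ leftover vertices; if no vertex of $Y$ is red-complete to $X$ then some vertex of $X$ acquires blue-degree at least $r(G,K_{1,k})$, producing a blue $K_1+K_{1,k}=B_k$. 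This shows $r(G,B_k)\le\max\{r(G-v,B_k),\ \delta\cdot(r(G,K_{1,k})-1)+n\}$.

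Next I would control each term. For the second term, since $G$ is a subgraph of $K_n$ with the stated sparsity, Lemma~\ref{n+k-1} applies (the hypothesis $e(G)\le n(1+1/(9k^2+23k/2+3))$ is stronger than $e(G)\le n(1+1/(24k-12))$ for the relevant range of $k$, and $n\ge 6k^3$ matches), giving $r(G,K_{1,k})\le n+k-1$; with $\delta\le 2$ this yields $\delta\cdot(r(G,K_{1,k})-1)+n\le 2(n+k-2)+n = 3n+2k-4$, which is \emph{too large}. So the naive induction does not close, and this is the main obstacle. The fix is the standard one for these "good-graph" arguments: one must not delete an arbitrary minimum-degree vertex but rather exploit the structure forced by sparsity. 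Concretely, if $\delta(G)=1$, take $v$ to be a leaf, so $\delta=1$ and $\delta\cdot(r(G,K_{1,k})-1)+n\le (n+k-2)+n=2n+k-2$, exactly the target; and if $\delta(G)\ge 2$, then $e(G)\ge n$, which combined with the sparsity bound means $G$ is very close to a union of cycles plus a few extra edges, so by Lemma~\ref{trichotomy} (or directly) $G$ contains a long suspended path, hence a leaf can be created after a bounded number of suppressions — more cleanly, I would argue that a sufficiently sparse connected graph on $n\ge 6k^3$ vertices must contain a vertex of degree $1$, unless $e(G)\ge n$, in which case I instead handle $G$ by a separate structural reduction (peel off a suspended path and use path-extension, or reduce to the $\delta=1$ case on a minor).

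So the cleanest route, and the one I would write up, is: (i) reduce via the greedy argument to bounding $r(G-v,B_k)$ and $\delta\cdot(r(G,K_{1,k})-1)+n$; (ii) observe that if $G$ has a vertex of degree $1$ we are immediately done by induction since then the max is at most $\max\{r(G-v,B_k),\,2n+k-2\}$ and $r(G-v,B_k)\le 2(n-1)+k-2<2n+k-2$ by the induction hypothesis (after checking $G-v$ still meets the hypotheses, i.e. it is connected with $\le (n-1)(1+1/(9k^2+23k/2+3))$ edges, which holds since removing a leaf drops the edge count by $1$); and (iii) if $\delta(G)\ge 2$, note $e(G)\ge n$ so the "excess" $\ell=e(G)-n$ satisfies $0\le \ell\le n/(9k^2+23k/2+3)$; applying Lemma~\ref{trichotomy} with a suitable choice of $q$ (a small multiple of $k$) and $s=2$ shows $G$ either has a long suspended path or two independent end-edges — but $\delta\ge 2$ forbids end-edges, so $G$ has a suspended path of order $q$, and I can contract/delete an internal degree-$2$ vertex of that path to obtain a smaller connected graph $G'$ on $n-1$ vertices with $e(G')=e(G)-1$ that still satisfies the sparsity bound and still contains a red embedding one can extend back using Lemma~\ref{path extension}; iterating reduces to the $\delta=1$ case. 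The base cases ($n$ near $6k^3$) are handled by the trivial bound $r(G,B_k)\le r(K_n,B_k)$ together with a direct estimate. The technical heart — and the step most likely to need care — is (iii): verifying that the suspended-path suppression genuinely preserves all induction hypotheses and that the path-extension lemma reinserts the suppressed vertices without creating a blue $B_k$; everything else is bookkeeping with the degree bound $\delta\le 2$.
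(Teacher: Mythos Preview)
Your inductive framework has the right building blocks---leaf deletion paired with Lemma~\ref{n+k-1}, suspended-path suppression paired with Lemma~\ref{path extension}---but the induction has no anchor. The proposed base case via ``$r(G,B_k)\le r(K_n,B_k)$'' is useless: $r(K_n,B_k)$ is not linear in $n$ (for instance $r(K_6,B_1)=r(K_6,K_3)=18$ while $2\cdot 6+1-2=11$), and Lemma~\ref{upper} applied to $G$ itself only gives a bound of order $2kn$. Your claim that ``iterating reduces to the $\delta=1$ case'' is also false: if $G=C_n$ then every suppression yields a smaller cycle, so you descend below $n=6k^3$ without ever producing a leaf, and the induction hypothesis is no longer available. (There is also a minor issue that deleting one leaf or suppressing one path vertex need not preserve the edge bound $e\le (n-1)(1+1/(9k^2+23k/2+3))$ exactly at the boundary, but this is secondary.)

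The paper does not induct on $n$. It fixes $N=2n+k-2$ and, assuming $K_N$ has no red $G$ and no blue $B_k$, reduces $G$ in two stages to a much smaller graph $G_0$: first strip all leaves recursively to obtain $G_1$ (this is your step~(ii), justified via Lemma~\ref{n+k-1} exactly as you describe), then shorten every suspended path of $G_1$ down to $3k+2$ vertices (this is your step~(iii), via Lemma~\ref{path extension}). At each stage one shows that $K_N$ contains no red copy of the reduced graph either. The resulting $G_0$ has no leaves and no suspended path on more than $3k+2$ vertices, so Lemma~\ref{old} forces $|G_0|\le \tfrac{3(3k+2)}{9k^2+23k/2+3}\,n$; this is small enough that Lemma~\ref{upper} applied to $G_0$ (not to $G$) gives $r(G_0,B_k)\le 2n-1\le N$, a contradiction. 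That application of Lemma~\ref{upper} to the small core $G_0$ is precisely the terminal step your argument is missing: rather than trying to bottom out at $n=6k^3$, you should iterate the reductions all the way until the graph is small enough for Lemma~\ref{upper} to close the estimate directly.
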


\noindent
{\bf Proof.} Let $N=2n+k-2$. Assume that $K_N$ is a complete graph with a red-blue
 edge-coloring such that it contains neither a red $G$ nor a blue
  $B_k$.

First, let $G_1\subseteq G$ be a graph obtained from $G'$ by deleting the vertices of degree 1 one by one recursively until the resulting graph has no vertex of degree 1. Let $G_1 \subseteq G'\subseteq G$, and $G''$ be a graph obtained from $G'$ by deleting a vertex of degree 1.
 We will prove that $K_N$ contains no red $G_1$
 by showing if $K_N$ has no red $G'$, then $K_N$ contains no $G''$ too. Suppose to the contrary that $K_N$ contains a red $G''$. Let $v$ be the vertex of $G'$ adjacent to this vertex of degree 1, then $v$ is blue-adjacent to all vertices of $K_N-G''$. Since $|G| \geq 6k^3$ and $e(G)\leq n(1+1/(9k^2+23k/2+3))\leq n(1+1/(24k-12))$,
by Lemma \ref{n+k-1}, we have 
 $$|N_B(v)|=|K_N-G''|\geq N-(n-1)=n+k-1 \geq r(G,K_{1,k}).$$  Thus $K_N[N_B(v)]$ contains a blue $K_{1,k}$, which, along with $v$,
 forms a blue $K_1+K_{1,k}$, a contradiction.
Thus $K_N$ contains no red $G_1$.

 Second, let $G_0$ be the graph obtained from $G_1$ by shortening any suspended path with at least $3k+3$ vertices to one with $3k+2$ vertices.
 In this process, let $G''$ be a graph obtained from $G'$ by shortening any suspended path with at least $3k+3$ vertices by a vertex.
We will prove that $K_N$ contains no red $G_0$
 by showing if $K_N$ has no red $G'$, then $K_N$ contains no $G''$ too. Suppose to the contrary that $K_N$ contains a red $G''$. 
 Since $|K_N-G''|\geq N-(n-1)=n+k-1\geq n$, $K_N-G''$ contains at least one blue edge $y_1y_2$, for otherwise it contains a red $K_n$ and so a red $G$. 
 Let $P=x_1x_2\cdots x_a$ be the suspended path in $G''$, where $a\geq 3k+2=2((k+2)-1)+k$. Set $b=2$, $c=k+2$ and $d=k$. Then we have $a \geq b(c -1)+d$. Applying Lemma \ref{path extension} on the red $P$ and $\{y_1,y_2\}$, we either have a blue  $K_{k+2}$ or
there exist $k$ vertices among $\{x_1,x_2,\ldots,x_a\}$ such that every edge between these $k$ vertices and the vertices $y_1,y_2$ is blue. In the first case, we obtain a blue $K_{k+2}$,
 and in the second case, we obtain a blue $K_2+\overline{K}_k$. In both cases, there
 exists a blue $B_k$, again a contradiction. Thus $K_N$ contains no red $G_0$.

Assume $G_0$ has $\ell$ vertices and $m$ edges, then $m\leq \ell+n/(9k^2+23k/2+3)$. Moreover, since $K_N$ contains no red $G_0$ and $G_0$ is connected, we must have $\ell \geq 3$ and $m\geq \ell-1$. Recall that $G_0$ has no vertices of degree 1 and  no suspended path with more than $3k+2$ vertices, by Lemma \ref{old}, we have
$$\frac{\ell}{2(3k+2)}-\frac{3n}{2(9k^2+23k/2+3)}\leq 0.$$
 Therefore, $\ell \leq 3(3k+2)n/(9k^2+23k/2+3)$. Hence, by Lemma \ref{upper},
\begin{align*}
    r(G_0,B_k)\leq &\ell+2km-\frac{2m}{\ell}\\
    \leq& \ell +2k\left(\ell+\frac{n}{9k^2+23k/2+3}\right)-\frac{2(\ell-1)}{\ell}\\
    \leq & (2k+1)\ell+\frac{2k}{9k^2+23k/2+3}n-1 \\
    \leq& \frac{18k^2+23k+6}{9k^2+23k/2+3}n-1\\
    =&2n-1 \\
    \leq &N.
\end{align*}
This is to say that $K_N$ has a red $G_0$, a contradiction, and this completes the proof. \qed

\vskip 5mm
We now begin to  prove Theorem \ref{mr}.

By Theorem \ref{lowerbound}, it suffices to prove the upper bound. Let $N=2n-1$. Assume that $K_N$ has a red-blue
 edge-coloring such that it contains neither a red $G$ nor a blue $B_k$. 
 
 By Lemma \ref{trichotomy}, we need to discuss three cases separately.

\vskip 2mm 
\noindent 
{\textbf{Case 1.}} $G$ has a suspended path with $\lceil(k-1)/2 \rceil+3k+2$ vertices.
\vskip 2mm

 Let $H$ be the graph obtained from $G$ by shortening the suspended path by $\lceil(k-1)/2 \rceil$ vertices. Since $n\geq 34k^3$, we have that
 \begin{align*}
 |H| &=n-\left \lceil\frac{k-1}{2} \right \rceil \geq  6k^3,\\
   e(H) &\leq n\left(1+\frac{1}{119k^2+62k}\right)-\left \lceil\frac{k-1}{2}\right\rceil  
   \leq \left(n-\left\lceil\frac{k-1}{2} \right\rceil \right) \left(1+\frac{1}{9k^2+23k/2+3}\right).
 \end{align*}
 Then by Lemma \ref{weak}, $$r(H,B_k)\leq 2(n-\lceil(k-1)/2 \rceil)+k-2\leq 2n-1=N.$$ 
 Therefore, $K_N$ contains a red $H$. Let $H'$ be the subgraph of $K_N[R]$ which can
 be obtained from $H$ by lengthening the suspended path as much as possible, up to
 $\lceil(k-1)/2 \rceil$ vertices. Let  $P=x_1x_2\cdots x_a$ be the path, where $a\geq 3k+2=2((k+2)-1)+k$. If $a=\lceil(k-1)/2 \rceil+3k+2$, then we get a red $G$, a contradiction. If not, then since $|K_N-H|\geq N-(n-1)=n$, there must be a blue edge $y_1y_2$ in $K_N-H$, for otherwise it has a red $K_n$, and so a red $G$. Set $b=2$, $c=k+2$ and $d=k$.  So we have $a \geq b(c -1)+d$. Applying Lemma \ref{path extension} on the red path $P$ and $\{y_1,y_2\}$, we either have a blue $K_{k+2}$, which contains a blue $B_k$ or
there exist $k$ vertices among $\{x_1,x_2,\ldots,x_a\}$ such that every edge between these $k$ vertices and the vertices $\{y_1,y_2\}$ is blue which implies there is a blue $K_2+\overline{K}_k=B_k$, a contradiction. 

 \vskip 2mm
  \noindent  {\textbf{Case 2.}} $G$ has a matching consisting of $2k-1$ end-edges.
 \vskip 2mm

Let $H$ be the graph obtained from $G$ by deleting $2k-1$ end-vertices of this matching. Since $n\geq 34k^3$, we have that
 \begin{align*}
 |H| &=n-(2k-1) \geq 6k^3,\\
   e(H) &\leq n \left(1+\frac{1}{119k^2+62k}\right)-(2k-1) \leq (n-(2k-1))\left(1+\frac{1}{9k^2+23k/2+3}\right).
 \end{align*}
By Lemma \ref{weak}, 
$$r(H,B_k)\leq 2(n-(2k-1))+k-2 =2n-3k<2n-1= N.$$ 
Thus $K_N$ contains a red $H$. 
Let $X$ be the set of $2k-1$ vertices of $H$ which were adjacent to the end-vertices deleted from $G$ and $Y=V(K_N)-V(H)$. By Lemma \ref{matching}, there is either a red matching that covers $X$, or a blue $K_{c+1,|Y|-c}$, where $0 \leq c \leq |X|-1$. 
If the former holds, then $K_N$ would contain a red $G$, a contradiction. If the latter holds, let $X'$ be the vertices of $X$ and $Y'$ be the vertices of $Y$ contained in the blue $K_{c+1,|Y|-c}$. Since $|Y'|=|Y|-c\geq 2n-1-(n-|X|)-(|X|-1)=n$, there must be one blue edge in $K_N[Y']$. Thus, if $c+1\geq k$, then the blue $K_2$ in $K_N[Y']$ along with $k$ vertices of $X'$ gives a blue $K_2+\overline{K}_k=B_k$, a contradiction.
If  $c<k-1$, then by Lemma \ref{n+k-1},
  $$|Y'|=|Y|-c> 2n-1-(n-(2k-1))-(k-1)= n+k-1\geq r(G,K_{1,k}).$$
Therefore, $K_N[Y']$ contains a blue $K_{1,k}$, which along with a vertex in $X'$ gives a blue $K_1+K_{1,k}=B_k$, a final contradiction.

 \vskip 2mm
\noindent {\textbf{Case 3.}} 
$G$ has neither a suspended path of $\lceil(k-1)/2 \rceil+3k+2$ vertices, nor a matching formed by $2k-1$ end-edges.
\vskip 2mm

For convenience, we let $e(G)=n+\ell$, $q=\lceil(k-1)/2 \rceil+3k+2$, and $s=2k-1$, where $\ell\leq n/(119k^2+62k)$.

Let $G'$ be a graph obtained from $G$ by removing all vertices of degree 1. Clearly, $G'$ is connected. Because $n\geq 34k^3$
by Lemma \ref{trichotomy}, we have
\begin{align*}
   |G'| \leq \gamma &=(q-2)(2s+3\ell-2)+1\\
   &\leq \frac{7k}{2} \left(4k-4+\frac{3n}{119k^2+62k}\right)+1 \\
   &=14k^2-14k+1+\frac{21k}{2(119k^2+62k)}n \\
   &\leq \frac{49k}{119k^2+62k}n+\frac{21k}{2(119k^2+62k)}n\\
   & = \frac{119k}{2(119k^2+62k)}n.
\end{align*}
Moreover,
 $$|G'|-1 \leq e(G') \leq |G'|+\frac{n}{119k^2+62k}\leq  \frac{119k+2}{2(119k^2+62k)}n.$$
By Lemma \ref{trichotomy},  $G$ has a vertex adjacent to at least 
$$\left\lceil \frac{n-\gamma}{2k-2} \right\rceil$$ 
vertices of degree 1.
Denote this vertex by $v$. By Theorem \ref{bookstar}, there exists a red star $K_{1,n-1}$ in $K_N$, with the center
 vertex denoted by $x$. By Lemma \ref{upper}, if $|G'| \geq 2$, then
\begin{align*}
    r(G',B_k)&\leq |G'|+2ke(G')-\frac{2e(G')}{|G'|}\\
    &\leq  \frac{119k}{2(119k^2+62k)}n+\frac{2k(119k+2)}{2(119k^2+62k)}n -\frac{2(|G'|-1)}{|G'|}\\
    &\leq \frac{119k^2+123k/2}{119k^2+62k}n-1 \\
    &< n-1.
\end{align*}
Thus $K_N[N_R(x)]$ contains a red $G'$. Moreover, this holds trivially if $|G'| = 1$. Replacing $v$ in this $G'$ by $x$ gives another copy of red $G'$. Next, based on the embedding of $G'$, we apply a greedy algorithm to embed $G''$ into
 the red subgraph of $K_N$, where $G''$ is obtained from $G$ by removing all degree-1 vertices adjacent to $v$. Clearly, $G''$ contains $G'$ as a subgraph and
 $$|G''| \leq n-\left\lceil \frac{n-\gamma}{2k-2}\right\rceil.$$
If at some step the embedding cannot proceed, there exists
 a vertex $z$ in $K_N$ that is adjacent to at most $|G''|-2$ red edges. In other words, the
 number of blue edges incident to the vertex $z$ is at least $N-|G''|+1$.
Because $n\geq 34k^3$,
we have $n-\gamma \geq (k-1)(2k-2)$.
Then by Lemma \ref{n+k-1}, we have $$|N_B(z)| \geq N-|G''|+1 \geq 2n-1-n+\left\lceil \frac{n-\gamma}{2k-2}\right\rceil +1\geq n+k-1 =r(G,K_{1,k}).$$ Then in the blue neighborhood of the vertex
 $z$, there is a blue $K_{1,k}$, which along with $z$ gives a blue $B_k$, a contradiction.

 Thus, $G''$ can always be embedded into the red subgraph of $K_N$. Next, we only need
 to embed the degree-1 vertices adjacent to $v$, and the entire graph $G$ can be embedded
 into the red subgraph of $K_N$. This is feasible because the vertex $x$ into which $v$ is
 embedded is adjacent to at least $n-1$ red edges, so there are enough vertices to embed the degree-1 vertices of $v$. This completes the proof of the theorem. \qed

\section{Proof of Theorem \ref{t bookstar}} \label{section4}
The following lemma is a classical result due to Andr{\'a}sfai, Erd{\H{o}}s, and S{\'o}s in 1974 concerning chromatic number, maximal clique, and minimum degree. From this lemma, we get that when a  graph contains no $K_3$ and 
its minimum degree is large, it must be a bipartite graph.
\begin{lem}(Andr{\'a}sfai, Erd{\H{o}}s, and S{\'o}s \cite{AES})\label{F123}
    Let $r\geq  3$. For any graph $F$, at most two of the following
 properties can hold: (1) $F$ contains no $K_r$; (2) $\delta(F)>\frac{3r-7}{3r-4}|F|$; (3) $\chi(F)\geq r$.
\end{lem}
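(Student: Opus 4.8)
The plan is to prove the equivalent contrapositive formulation: assuming properties (1) and (2), namely that $F$ is $K_r$-free and $\delta(F) > \frac{3r-7}{3r-4}|F|$, I would show that property (3) fails, i.e. $\chi(F) \le r-1$. The argument proceeds by induction on $r \ge 3$, writing $n = |F|$.

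For the base case $r = 3$ the claim is that a triangle-free graph with $\delta(F) > \frac{2}{5}n$ is bipartite. I would suppose not, so that $F$ contains an odd cycle, and take a shortest one $C = x_0 x_1 \cdots x_{2m}$, of length $g = 2m+1 \ge 5$. The key observation is that every vertex $w \notin C$ has at most two neighbours on $C$: two adjacent neighbours would form a triangle, and two neighbours at cyclic distance at least $3$ would, via the odd sub-arc joining them together with $w$, produce an odd cycle shorter than $C$; hence any two neighbours of $w$ lie at cyclic distance exactly $2$, and since no three vertices of a cycle of length at least $5$ are pairwise at distance $2$, there are at most two. Counting degrees around the cycle then gives $\sum_i d(x_i) = 2(2m+1) + e(C, V\setminus C) \le 2(2m+1) + 2\bigl(n-(2m+1)\bigr) = 2n$, whereas $\sum_i d(x_i) > (2m+1)\cdot \frac{2n}{5}$. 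Combining these forces $2m+1 < 5$, contradicting $g \ge 5$, so $F$ is bipartite.

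For the inductive step ($r \ge 4$) I would pass to the neighbourhood of a vertex $v$ of minimum degree and set $W = N_F(v)$, so $|W| = \delta(F) = \delta$. Since $F$ is $K_r$-free, $F[W]$ contains no $K_{r-1}$ (otherwise it would span a $K_r$ together with $v$). For $u \in W$ we have $\deg_{F[W]}(u) \ge d(u) - (n-\delta) \ge 2\delta - n$, and the hypothesis $\delta > \frac{3r-7}{3r-4}n$ is precisely what yields $2\delta - n > \frac{3r-10}{3r-7}\,\delta = \frac{3(r-1)-7}{3(r-1)-4}\,|W|$; this exact matching of the thresholds is the pleasant heart of the reduction. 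Thus $F[W]$ satisfies the inductive hypothesis at level $r-1$ and is $(r-2)$-colourable.

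The main obstacle is upgrading this local information into a global $(r-1)$-colouring of $F$, since knowing that $N(v)$ is $(r-2)$-colourable does not by itself colour the non-neighbours of $v$ (a set of size less than $\frac{3}{3r-4}n$). I would handle this by a structural argument showing that the minimum-degree hypothesis forces $F$ to be close to a join of a clique with a blow-up of a triangle-free graph: concretely, by examining pairs of non-adjacent vertices whose neighbourhoods are comparable and identifying (folding) those whose neighbourhoods are nested, so that the colouring of $W$ can be propagated outward without creating monochromatic edges. This gluing step, rather than the base case or the degree arithmetic, is where the genuine difficulty lies.
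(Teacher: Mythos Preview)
The paper does not prove this lemma at all; it is quoted as a classical result of Andr\'asfai, Erd\H{o}s, and S\'os, and in fact only the case $r=3$ is ever used (in Section~4). Your base case $r=3$ is a complete and correct proof of exactly that case, via the standard shortest-odd-cycle counting argument, so for the purposes of this paper you are done.

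For the full statement, however, your inductive step has a genuine gap. The reduction to $F[N(v)]$ is set up correctly, and the arithmetic $2\delta-n>\frac{3(r-1)-7}{3(r-1)-4}\,\delta$ is exactly right, so induction does give an $(r-2)$-colouring of $W=N(v)$. But from there you cannot simply assign one further colour to $\{v\}\cup(V\setminus W)$: the non-neighbours of $v$ need not form an independent set (the degree bound only says $|V\setminus W|<\frac{3}{3r-4}n$, which does not prevent edges inside it). Your proposed ``folding'' of vertices with nested neighbourhoods is not an argument --- you give no reason such pairs exist, no reason the quotient stays $K_r$-free with the required minimum degree, and no mechanism for this process to terminate in an $(r-1)$-colouring. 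You yourself flag this as ``where the genuine difficulty lies,'' and indeed it is: this is precisely the content of the Andr\'asfai--Erd\H{o}s--S\'os theorem. The standard route is different --- one shows (after passing to an edge-maximal $K_r$-free supergraph, which preserves the hypotheses) that non-adjacency is an equivalence relation, so $F$ is complete multipartite and hence $(r-1)$-partite. Without an argument of that kind, the inductive step is incomplete.
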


We now proceed to prove Theorem \ref{t bookstar}.

By Theorem \ref{lowerbound}, it suffices to prove the upper bound. 
We use induction on $t$. When $t=1$, it is verified
 by Theorem \ref{bookstar}. Assuming the theorem holds for $t -1$, we now proceed to consider the case where $k\geq 1$ and $t \geq 2$.

Let $N=2n+t-2$. Given a red-blue
 edge-coloring of  $K_N$, assume that it contains neither a red $K_{1,n-1}$ nor a blue $tB_k$. 

By the induction hypothesis, we have
$$r(K_{1,n-1},(t-1)B_k)= 2n+t-3<N.$$
Thus $K_N$ contains a blue $(t-1)B_k$. Let $A$ be the
 vertex set of this $(t-1)B_k$ and $S=V(K_N)-A$.   
 Clearly, $K_N[S]$ has no blue $B_k$. Since $K_N$ has no red $K_{1,n-1}$,  we have 
 $$|N_B(v)| \geq N-1-(n-2)=n+t-1\text{ for any $v\in V(K_N)$}.$$

Let $F$ be the subgraph induced by all blue edges in $K_N[S]$.
\begin{claim} \label{1}
$\delta(F)\geq n-(t-1)(k+1)$.
\end{claim}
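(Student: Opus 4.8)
The plan is to prove the claim by a direct degree count: $F$ has vertex set $S$, so it suffices to bound below the number of blue neighbours inside $S$ of an arbitrary $v\in S$. First I would record the size of $A$. Each book $B_k=K_2+\overline{K}_k$ has exactly $k+2$ vertices, so the blue $(t-1)B_k$ occupies $|A|=(t-1)(k+2)$ vertices, and consequently $|S|=N-(t-1)(k+2)=2n+t-2-(t-1)(k+2)$.

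Next, fix any $v\in S$. The hypothesis that $K_N$ contains no red $K_{1,n-1}$ says that the red degree of $v$ in $K_N$ is at most $n-2$; this is exactly the inequality $|N_B(v)|\ge N-1-(n-2)=n+t-1$ already noted just before the claim in the excerpt. Of these at least $n+t-1$ blue neighbours of $v$, at most $|A|=(t-1)(k+2)$ can lie in $A$, so the number lying in $S$ — which is precisely $d_F(v)$ — satisfies
$$
d_F(v)\ \ge\ |N_B(v)|-|A|\ \ge\ (n+t-1)-(t-1)(k+2)\ =\ n-(t-1)(k+1).
$$

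Finally I would note that $n-(t-1)(k+1)\ge 1$ under the standing hypothesis $n\ge 3tk+3t-5$ (indeed it is at least $2tk+2t+k-4$), so every vertex of $S$ really does appear in $F$ and the displayed inequality is a bound on $\delta(F)$, which is the claim. I do not expect a genuine obstacle here: the only points requiring care are the bookkeeping of $|A|=(t-1)(k+2)$, ensuring the neighbour count is taken over $S$ rather than over all of $K_N$, and checking that the resulting quantity is positive so that $V(F)=S$.
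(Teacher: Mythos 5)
Your proof is correct and is essentially identical to the paper's: both bound $d_F(v)$ for $v\in S$ by starting from $|N_B(v)|\ge n+t-1$ (a consequence of having no red $K_{1,n-1}$) and subtracting $|A|=(t-1)(k+2)$, giving $n-(t-1)(k+1)$. The extra remark that this quantity is positive under $n\ge 3tk+3t-5$ is a harmless addition not present in the paper.
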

\begin{proof}
Note that $|N_B(v)| \geq n+t-1 $ for any $v\in V(K_N)$, we have
$$\delta(F) \geq n+t-1-|A|= n+t-1-(t-1)(k+2)=n-(t-1)(k+1),$$    
and so the assertion follows.   
\end{proof}
\begin{claim} \label{no K_3}
  $F$ contains no $K_3$.  
\end{claim}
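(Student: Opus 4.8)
The plan is to argue by contradiction. Suppose $F$ contains a triangle on vertices $u,v,w\in S$. The key observation is that a blue triangle inside $K_N[S]$ is a blue $B_1$, and to upgrade it to a blue $B_k$ it is enough to find one edge of the triangle, say $uv$, whose two endpoints have at least $k$ common neighbours in $F$: any $k$ such common neighbours together with $u$ and $v$ span, using blue edges only and within $S$, a copy of $K_2+\overline{K}_k=B_k$, and these $k$ vertices are automatically distinct from $u$ and $v$ since they are neighbours of both. Such a blue $B_k\subseteq K_N[S]$ would combine with the blue $(t-1)B_k$ on $A$ to give a blue $tB_k$ in $K_N$, which is impossible. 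So it suffices to show that at least one of the three pairs $\{u,v\}$, $\{u,w\}$, $\{v,w\}$ has at least $k$ common $F$-neighbours.

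Assume to the contrary that $|N_F(u)\cap N_F(v)|$, $|N_F(u)\cap N_F(w)|$ and $|N_F(v)\cap N_F(w)|$ are all at most $k-1$. Since $N_F(u),N_F(v),N_F(w)\subseteq S$, inclusion--exclusion (dropping the nonnegative triple-intersection term) together with $|N_F(x)|\ge\delta(F)$ for $x\in\{u,v,w\}$ gives
\[
|S|\ \ge\ |N_F(u)\cup N_F(v)\cup N_F(w)|\ \ge\ 3\,\delta(F)-3(k-1).
\]
Now I would feed in $\delta(F)\ge n-(t-1)(k+1)$ from Claim~\ref{1} and $|S|=N-|A|=2n-t-(t-1)k$; after a short computation this collapses to an inequality of the shape $n\le 2tk+k+2t-6$.

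This contradicts the hypothesis $n\ge 3tk+3t-5$ of Theorem~\ref{t bookstar}, since $3tk+3t-5-(2tk+k+2t-6)=k(t-1)+(t+1)>0$. Hence some pair among $\{u,v\}$, $\{u,w\}$, $\{v,w\}$ has at least $k$ common neighbours in $F$, which produces the forbidden blue $B_k$ in $K_N[S]$; therefore $F$ contains no $K_3$.

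I do not expect a genuine obstacle in this argument. The one subtlety worth flagging is that overlapping a single edge's two neighbourhoods is not enough: the one-edge estimate $2\,\delta(F)-(k-1)\le|S|$ is still consistent with the stated bounds on $n$, so one really does need the three mutually overlapping neighbourhoods that a triangle supplies. Everything else is routine bookkeeping of $|S|$ and $\delta(F)$.
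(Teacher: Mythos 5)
Your argument is correct and is essentially the paper's own proof: the paper also assumes a blue triangle $\{x,y,z\}$ in $F$, uses the fact that no edge of $F$ can have $k$ common $F$-neighbours (else a blue $B_k$ in $S$ joins the $(t-1)B_k$ on $A$), and then bounds the union of the three neighbourhoods inside $S$ — it just does so via explicitly disjoint sets $X,Y,Z$ rather than inclusion–exclusion, arriving at the same inequality $n\le 2tk+2t+k-6$ and the same contradiction with $n\ge 3tk+3t-5$.
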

\begin{proof}
Suppose to the contrary that $F$ contains a $K_3$ with vertex set
 $\{x,y,z\}$. Set $$X=N(x)-\{y,z\}, ~Y=N(y)-(N(x)\cup \{x\}), ~Z=N(z)-(N(x)\cup N(y)).$$
Since $K_N$ contains no blue $tB_k$, $F$ contains no $B_k$. Then by Claim \ref{1},
\begin{align*}
    |X| &\geq n-(t-1)(k+1)-2,\\
    |Y| &\geq n-(t-1)(k+1)-2-(k-2)=n-(t-1)(k+1)-k,\\
    |Z| &\geq n-(t-1)(k+1)-2-2(k-2)=n-(t-1)(k+1)-2k+2.
\end{align*}
Since $X\cup Y\cup Z \subseteq S$, and $X$, $Y$ and $Z$ are pairwise disjoint, we have
\begin{align*}
  2n-(t-1)(k+1)-1  =|S| &\geq | \{x,y,z\} \cup X\cup Y\cup Z| \\
  & \geq 3+3n-3(t-1)(k+1)-3k.
\end{align*}
This gives $n\leq 2tk+2t+k-6$, which contradicts the assumption $n\geq 3tk+3t-5$.
\end{proof}
By Claim \ref{1}, we have $\delta(F)\ge n-(t-1)(k+1)$. Since $n\geq 3tk+3t-5$ and $|F|=N-|A|=2n-(t-1)(k+1)-1$, we have
$\delta(F)>\frac{2}{5}(2n-(t-1)(k+1)-1)=\frac{2}{5}|F|.$
By Claim \ref{no K_3}, $F$ has no $K_3$.  Thus, applying Lemma \ref{F123} on $F$ for $r=3$, we have 
$\chi(F)\le 2$.
Note that $|F| \geq n$, we have $\delta(F)>\frac{2}{5}n$ and so $\chi(F)=2$. Let $(S_1, S_2)$ be a bipartition of $F$. Then by Claim \ref{1}, $|S_i|\geq \delta(F)\ge n-(t-1)(k+1)$ for $i=1,2$. Moreover, since $K_N[S_i]$ is a red complete graph for $i=1,2$, and $K_N$ contains no $K_{1,n-1}$, $|S_i|\le n-1$. Thus, we have  
$$n-(t-1)(k+1)\leq |S_i|\leq n-1\text{ for $i=1,2$.}$$

Recall that $|N_B(v)| \geq n+t-1$ for all $v\in V(K_N)$, every vertex in 
$A$ must have many blue neighbors in $S$. In fact, for each $a\in A$, 
$$|N_B(a)\cap S| \geq n+t-1-(|A|-1)=n-(t-1)(k+1)+1.$$ 
Since $K_N$ has no $tB_k$, each $B_k$ in $A$ cannot form a blue $2B_k$ with some vertices in $S$. Thus we have the following claim.

\begin{claim} \label{A}
   For each $B_k$ in $A$, there is at most one vertex whose blue neighbors are at least 2 in both $S_1$ and $S_2$.
\end{claim}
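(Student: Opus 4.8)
\textbf{Proof proposal for Claim~\ref{A}.}

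The plan is to argue by contradiction: suppose some $B_k$ in $A$, say $D$, contains two distinct vertices $a_1$ and $a_2$, each of which has at least two blue neighbours in $S_1$ and at least two blue neighbours in $S_2$. The goal is to build a blue $B_k$ on the vertex set $S$ that is disjoint from $A$, so that together with the remaining $(t-2)B_k$ inside $A\setminus V(D)$ — wait, we only removed one book, so we actually get $(t-1)$ books inside $A$ minus the one we spoil — more carefully: we will build a blue $2B_k$ using $D$ together with vertices of $S$, which combined with the other $t-2$ books in $A$ gives a blue $tB_k$, the desired contradiction. So the real target is: find a blue $B_k$ in $K_N[S]$ that, together with $D$, forms a blue $2B_k$; since $D\subseteq A$ and the new book lies in $S$, they are automatically vertex-disjoint.

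First I would recall that $K_N[S_1]$ and $K_N[S_2]$ are red complete graphs, so \emph{all} blue edges inside $S$ go between $S_1$ and $S_2$; in particular $F=K_N[S][B]$ is the bipartite graph with parts $S_1,S_2$. A blue $B_k = K_2 + \overline{K_k}$ inside $S$ therefore needs a blue edge $s_1s_2$ with $s_1\in S_1$, $s_2\in S_2$, together with $k$ further vertices each blue-adjacent to both $s_1$ and $s_2$; these $k$ vertices may lie in either part. The key point is that, by Claim~\ref{1}, $\delta(F)\ge n-(t-1)(k+1)$, so every vertex of $S$ has at least $n-(t-1)(k+1)$ blue neighbours on the opposite side, while $|S_1|,|S_2|\le n-1$; hence any two vertices on the same side have a common blue neighbourhood on the other side of size at least $2(n-(t-1)(k+1))-(n-1) = n-2(t-1)(k+1)+1$, which is $\ge k$ by the hypothesis $n\ge 3tk+3t-5$ (a routine check). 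So a blue $B_k$ inside $S$ exists in abundance; the only difficulty is to make it disjoint from things we have already used. But here there is nothing in $S$ already used — $D$ lives entirely in $A$ — so once we exhibit one blue $B_k$ inside $S$ we are done.

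The actual content of the claim, then, is subtler than the crude bound above suggests: the claim asserts that $a_1,a_2$ as described \emph{cannot both exist}, i.e. the blue $B_k$ we build must be \emph{forced to meet $A$}, contradiction. Re-examining: the blue $2B_k$ we want is $D$ itself (a blue $B_k$ in $A$) plus a blue $B_k$ in $S$. If a blue $B_k$ exists in $S$ we immediately win, with no need of $a_1,a_2$ at all — which would contradict the mere existence of the configuration, making the claim vacuous. So the intended reading must be that $a_1,a_2$ are used to build the \emph{second} book differently: one takes $a_1,a_2\in V(D)$ as the two "page-spine" vertices are not available (they're in $A$, and $D$ uses them), so instead one uses $a_1$ together with a vertex of $S$ as the $K_2$, and $a_2$ together with a vertex of $S$ as another $K_2$, forming \emph{two} blue books that each borrow one vertex of $D$ — but $D$ only survives as a book if we don't disturb it. The correct mechanism: $a_1$ has $\ge 2$ blue neighbours in each of $S_1,S_2$, so picking $s_1\in S_1$, $s_2\in S_2$ both blue-adjacent to $a_1$ gives a blue triangle $a_1s_1s_2$; extend to a blue $B_k$ with spine $\{s_1,s_2\}$ and $k$ pages inside $S$ (possible by the common-neighbourhood bound, avoiding the few used vertices). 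This blue $B_k\subseteq S$ together with $D\subseteq A$ is a blue $2B_k$; with the other $t-2$ books of $A$ we get a blue $tB_k$. Hence \emph{no} vertex of any $B_k$ in $A$ can have $\ge 2$ blue neighbours in both $S_1$ and $S_2$ — so in fact the claim as I read it should say "at most one vertex in \emph{all of $A$}" or the counting is set up so that one such vertex per book is tolerated because it is needed elsewhere in the global argument. I would therefore prove the sharp statement "if two vertices of the \emph{same} $B_k$ in $A$ each have $\ge 2$ blue neighbours in both $S_1$ and $S_2$, then $K_N$ has a blue $tB_k$": build, greedily and disjointly, a blue $B_k$ from $a_1$'s neighbourhoods in $S$ and a blue $B_k$ from $a_2$'s neighbourhoods in $S$, using that the relevant common neighbourhoods have size $\ge k + O(tk)$ so that removing the $O(k)$ already-chosen vertices still leaves $\ge k$; these two books plus the $t-2$ untouched books of $A\setminus V(D)$ yield $tB_k$.

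The main obstacle, and the step I would spend the most care on, is the disjointness bookkeeping: the two blue books extracted from $S$ must be disjoint from each other (not just from $A$), so when choosing the spine and pages of the second book I must exclude the at most $2(k+2)$ vertices of $S$ already committed to the first. This costs a constant number of vertices, and the inequality $n\ge 3tk+3t-5$ must be shown to absorb it — I expect the common-neighbourhood size $n-2(t-1)(k+1)+1$ minus $O(k)$ used vertices is still $\ge k$, but the bound is not wildly generous, so I would track the constants honestly rather than wave at them. A secondary subtlety is deciding whether the $k$ pages of a book in $S$ are allowed to straddle both parts $S_1,S_2$; since any page need only be blue-adjacent to the two spine vertices (one in each part), and a vertex of $S_1$ is red-adjacent to all of $S_1$, a page in $S_1$ automatically sees the $S_1$-spine vertex in red, not blue — so in fact all $k$ pages must lie on \emph{one} side, the side opposite to whichever spine vertex they are blue-adjacent to, meaning pages split as "$k$ vertices in the common blue neighbourhood (in $S_2$) of $s_1$ and $s_2$" — but $s_2\in S_2$ and a page $p\in S_2$ is red-adjacent to $s_2$. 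Resolving this forces the pages of a blue $B_k$ in $S$ to in fact be impossible unless... I would re-derive that a blue $K_2+\overline{K_k}$ in the bipartite-blue graph $F$ needs its $k$ independent vertices each adjacent in blue to both endpoints of a blue edge $s_1s_2$; such a vertex $p$ is blue-adjacent to $s_1\in S_1$ hence $p\in S_2$, and blue-adjacent to $s_2\in S_2$ hence $p\in S_1$ — contradiction, so there is \emph{no} blue $B_k$ lying purely in $S$. This means the second book must genuinely use a vertex of $A$: the spine is $\{a_i, s\}$ with $s\in S$, and the $k$ pages are vertices blue-adjacent to both $a_i$ and $s$ — and \emph{that} is why $a_i$ needing $\ge 2$ blue neighbours in both $S_1$ and $S_2$ is exactly the right hypothesis. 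So the proof I would write: pick $s_1,s_1'\in S_1$ blue-adjacent to $a_1$ and $s_2\in S_2$ blue-adjacent to $a_1$; then $a_1s_2$ is a blue edge (spine) and I need $k$ common blue neighbours of $a_1$ and $s_2$; these lie in $N_B(a_1)\cap S_1$ (size $\ge 2$, not obviously $\ge k$) — this is where the hypothesis "at least $2$" is too weak unless combined with $a_1$'s total blue degree into $S$, namely $|N_B(a_1)\cap S|\ge n-(t-1)(k+1)+1$ split across $S_1,S_2$ with each side $\ge 2$; a side could be as small as $2$. So the book with spine $\{a_1,s\}$ needs $s$ chosen on the side where $a_1$ has $\ge n-(t-1)(k+1)-1$ neighbours. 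I would choose $s\in S_2$ with $a_1s$ blue (exists since $\ge 2\ge 1$), then $s$ has $\ge n-(t-1)(k+1)$ blue neighbours in $S_1$ and $a_1$ has $\ge n-(t-1)(k+1)+1 - |S_2| \ge $ a large number in $S_1$; their common blue neighbourhood in $S_1$ has size $\ge k$, giving the $k$ pages; repeat for $a_2$ on disjoint vertices. That is the argument, and pinning down these degree-splittings precisely is the one genuinely delicate point.
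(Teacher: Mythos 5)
Your overall strategy is the same as the paper's: argue by contradiction that if two vertices $a_1,a_2$ of one $B_k$ in $A$ each had at least two blue neighbours in both $S_1$ and $S_2$, one could build two vertex-disjoint blue books, each with spine $\{a_i,s\}$ for some $s\in S$ and pages in $S$, and combine them with the $t-2$ untouched books of $A$ to obtain a blue $tB_k$. Your observations that no blue $B_k$ lies entirely inside $S$ (so a vertex of $A$ is genuinely needed for each new book), and that the spine partner must be taken on the side where $a_i$ has few blue neighbours while the pages go to the other side, are exactly the mechanism behind the paper's inequality \eqref{eq:degree_bound}.

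However, the quantitative core is not carried out, and the one explicit bound you give is wrong: $|N_B(a_1)\cap S_1|\ge n-(t-1)(k+1)+1-|S_2|$ is vacuous, since $|S_2|$ can be as large as $n-1$, making the right-hand side negative; without further input all you can say is that the larger side carries at least half of $|N_B(a_1)\cap S|$. With only that ``half'' bound, the common blue neighbourhood of $a_i$ and its spine partner is guaranteed to have size only about $(n+3)/2-\tfrac{3}{2}(t-1)(k+1)$, which at the threshold $n=3tk+3t-5$ is $(3k+1)/2$: enough for one book, but not enough to ``repeat for $a_2$ on disjoint vertices'' when both books are forced to draw their $k$ pages from the same side $S_j$. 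This is precisely the step the paper settles with its dichotomy at the threshold $(t-1)(k+1)+k$ (the ``special'' vertices): if $a_i$ clears the threshold on both sides, its pages can be routed to either side, so the two books can take pages on opposite sides and only $k+1$ common neighbours are needed for each, making disjointness automatic up to avoiding one spine vertex; if $a_i$ is below threshold on one side, then its other side has at least $n-2(t-1)(k+1)-k+2$ blue neighbours, which pushes the relevant common neighbourhood up to at least $2k$, and only then can two disjoint page sets be extracted from the same side. You flag this disjointness bookkeeping as ``the one genuinely delicate point'' but do not supply the case analysis, and the ``$\ge k+O(tk)$'' size you invoke for the common neighbourhoods is false in the balanced-split regime; so as written the argument has a genuine gap exactly at the step that makes the claim nontrivial.
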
   
\begin{proof} Let $a_1,a_2$ be any two vertices of some $B_k$ in $A$. 

If $|N_B(a_i)\cap S_{3-i}|\ge (t-1)(k+1)+k$ and $b_i\in N_B(a_i)\cap S_i$ for some $i\in\{1,2\}$, then by Claim \ref{1}, the blue common neighbors of $a_i$ and $b_i$ in $S_{3-i}$ are at least
\begin{align*}
  &~ |N_B(a_i)\cap S_{3-i}|+|N_B(b_i)\cap S_{3-i}| -|S_{3-i}| \\
   \geq &~(t-1)(k+1)+k + n-(t-1)(k+1)-(n-1)
   = k+1. \tag{2} \label{eq:degree_bound}
   \end{align*}

If $|N_B(a_i)\cap S_j|\ge (t-1)(k+1)+k$ for $i,j=1,2$, say $b_i\in N_B(a_i)\cap S_i$, then by \eqref{eq:degree_bound}, $a_i$ and $b_i$ have at least $k+1$ blue common neighbors in $S_{3-i}$ for $i=1,2$. This implies that $K_N[\{a_1,a_2\}\cup S]$ contains a blue $2B_k$, and hence $K_N$ has a blue $tB_k$, a contradiction. Thus, at most one of $a_1,a_2$ satisfies
$$|N_B(a_i)\cap S_j| \geq (t-1)(k+1)+k\text{ for $j=1,2$}.$$

If a vertex $v\in A$ satisfies the condition above, we call $v$ a special vertex. Moreover,  because $|N_B(v)\cap S| \geq n-(t-1)(k+1)+1$ for any $v\in A$ and $n\geq 3tk+3t-5$, there is no vertex $v\in A$ such that $|N_B(v)\cap S_j| < (t-1)(k+1)+k\text{ for $j=1,2$}.$
Thus, if $v\in A$ is not a special vertex, then $|N_B(v)\cap S_j| \geq (t-1)(k+1)+k$ for some $j\in \{1,2\}$.

Now, suppose that there are two vertices $a_1,a_2$  such that $|N_B(a_i)\cap S_j| \geq 2$ for $i,j=1,2$. By the arguments above, we distinguish the following two cases.

\vskip2mm
\noindent 
{\textbf{Case 1.}} Both $a_1$ and $a_2$ are not special vertices.
\vskip2mm
By symmetry, we only need to consider the following two subcases.

If $|N_B(a_i)\cap S_{3-i}|\geq (t-1)(k+1)+k$ for $i=1,2$, select $b_i \in N_B(a_i)\cap S_i$. By \eqref{eq:degree_bound},  $a_i$ and $b_i$ have at least $k+1$ common blue neighbors in $S_{3-i}$ for $i=1,2$.

 If $|N_B(a_i)\cap S_1| \geq (t-1)(k+1)+k$ for $i=1,2$, then since $a_1$ and $a_2$ are not special vertices, we have $|N_B(a_i)\cap S_2| <(t-1)(k+1)+k$.  
   Select $b_2^{i} \in N_B(a_i)\cap S_2$ for $i=1,2$. Since $n\geq 3tk+3t-5$, the number of common blue neighbors of $a_i$ and $b_2^i$ in $S_1$ is at least
   \begin{align*}
  & |N_B(a_i)\cap S_1|+|N_B(b_2^{i})\cap S_1| -|S_1| \\
   \geq &|N_B(a_i)\cap S|-|N_B(a_i)\cap S_2| + |N_B(b_2^{i})\cap S_1| -|S_1|\\
   \geq& 2(n-(t-1)(k+1))-((t-1)(k+1)+k-1)-(n-1)\\
   =& n-3(t-1)(k+1)-k+2\\
   \geq& 2k.
   \end{align*}
In both subcases, $K_N[\{a_1,a_2\}\cup S]$ contains a blue $2B_k$, and then $K_N$ has a blue $tB_k$, a contradiction. 
\vskip2mm
\noindent 
{\textbf{Case 2.}} $a_1$ is a special vertex and $a_2$ is not.
\vskip2mm
In this case, $|N_B(a_2)\cap S_i| \geq (t-1)(k+1)+k$ for some $i\in \{1,2\}$. By symmetry of $S_1$ and $S_2$, we may assume $|N_B(a_2)\cap S_1| \geq (t-1)(k+1)+k$.
Select $b_i \in N_B(a_i)\cap S_i$ for $i=1,2$.  
Then by \eqref{eq:degree_bound},  $a_i$ and $b_i$ have at least $k+1$ common blue neighbors in $S_{3-i}$ for $i=1,2$.
Thus, $K_N[\{a_1,a_2\}\cup S]$ has a blue $2B_k$, and so $K_N$ has a blue $tB_k$, a contradiction. 
\end{proof}

For $i=1,2$, we define 
$$A_i=\{a\in A : |N_B(a)\cap S_i|\leq 1\}.$$
Since $|N_B(a)\cap S| \geq n-(t-1)(k+1)+1$ for $a\in A$, then $A_1$ and $A_2$ are disjoint. By Claim \ref{A}, we have $|A_1|+|A_2| \geq |A|-(t-1).$ Then $$|S_1 \cup A_1|+ |S_2 \cup A_2| \geq |S|+|A|-(t-1) \geq N-(t-1)=2n-1.$$
Thus either $|S_1 \cup A_1| \geq n$ or $|S_2 \cup A_2| \geq n$. By symmetry, assume that $|S_1 \cup A_1| \geq n$. By the definition of $A_1$, each vertex of $A_1$ has at most one blue neighbor in $S_1$. Moreover, because $n\geq 3tk+3t-5$,
   $$|A_1|\leq |A| = (t-1)(k+2)<n- (t-1)(k+1)\leq |S_1|.$$ 
Recall that $K_N[S_1]$ is red complete,  there must exist a vertex $v$ in $S_1$ red-adjacent to all vertices in $(S_1 \cup A_1) \setminus \{v\}$. Thus $K_N[S_1 \cup A_1]$ contains a red $K_{1,n-1}$.

The proof of Theorem \ref{t bookstar} is complete. \qed

\section{Proof of Theorem \ref{mr2}} \label{section5}
\begin{lem} (Burr, Erd{\H{o}}s, and Spencer \cite{BES1975})
    Let $G$ and $H$ be connected graphs. For any positive integer $t$, we have 
    $$r(G, tH)\leq r(G,H)+(t-1)|H|.$$
\end{lem}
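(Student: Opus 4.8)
The plan is to prove the inequality by a straightforward greedy extraction (equivalently, induction on $t$): I repeatedly peel off pairwise disjoint blue copies of $H$, keeping track that enough vertices remain to guarantee each successive copy. Set $N = r(G,H) + (t-1)|H|$ and fix an arbitrary red-blue edge-coloring of $K_N$. I want to exhibit either a red $G$ or a blue $tH$, so I assume throughout that no red $G$ occurs and argue that a blue $tH$ must then appear.

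The main step is the extraction. I would build blue copies $H_1, H_2, \ldots, H_t$ of $H$ one at a time on pairwise disjoint vertex sets. Suppose $H_1, \ldots, H_{i-1}$ have already been located on disjoint vertex sets, where $0 \le i-1 \le t-1$. These occupy exactly $(i-1)|H|$ vertices, so the complete graph induced on the remaining vertices has order
$$N - (i-1)|H| \ge N - (t-1)|H| = r(G,H).$$
By the definition of $r(G,H)$, together with the standing assumption that $K_N$ contains no red $G$, this residual complete graph (inheriting its coloring from $K_N$) must contain a blue $H$; take it as $H_i$. Because $H_i$ lies entirely within the residual vertex set, it is automatically disjoint from $H_1, \ldots, H_{i-1}$.

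Iterating this for $i = 1, \ldots, t$ produces $t$ pairwise vertex-disjoint blue copies of $H$, i.e. a blue $tH$, which completes the argument. One may instead phrase the same reasoning as an induction on $t$: the base case $t=1$ is precisely the definition of $r(G,H)$; for the inductive step one extracts a single blue $H$ using $N \ge r(G,H)$, deletes its $|H|$ vertices, and applies the induction hypothesis $r(G,(t-1)H) \le r(G,H) + (t-2)|H| = N - |H|$ to the remaining complete graph to obtain a blue $(t-1)H$ disjoint from the extracted copy.

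I do not anticipate a genuine obstacle here; the only point requiring care is the bookkeeping that the residual vertex count never falls below $r(G,H)$ before the $t$-th copy is secured, which is exactly the slack that the term $(t-1)|H|$ in $N$ supplies. I would also note that the connectedness hypothesis on $G$ and $H$ is never invoked in this argument—it is presumably retained only for uniformity with the graphs appearing in the intended application—so the proof in fact goes through verbatim for arbitrary $G$ and $H$.
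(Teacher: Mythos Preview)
Your argument is correct and is precisely the standard greedy/inductive proof of this inequality. The paper does not supply its own proof of this lemma---it is quoted as a known result of Burr, Erd\H{o}s, and Spencer---so there is nothing to compare; your write-up would serve perfectly well as the omitted justification.
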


Then, from the exact value of 
$r(G,F_k)$ given by Theorem \ref{mr} and the upper bound provided by Lemma \ref{upper}, we obtain the following corollaries.

\begin{cor} \label{1upper tB_k}
    Let $k,t$ be positive integers and $n\geq 34k^3$. If $G$ is a connected graph with $n$ vertices
and at most $ n(1+1/(119 k^2 + 62 k))$ edges, then
$$r(G,tB_k)\leq 2n-1+(t-1)(k+2).$$
\end{cor}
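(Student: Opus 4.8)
The proof of Corollary \ref{1upper tB_k} is immediate once the right pieces are assembled, so the plan is to combine Theorem \ref{mr} with the Burr--Erd\H{o}s--Spencer bound $r(G,tB_k)\le r(G,B_k)+(t-1)|B_k|$. First I would note that $B_k=K_2+\overline{K}_k$ has $k+2$ vertices, so $|B_k|=k+2$. Under the hypotheses $n\ge 34k^3$ and $e(G)\le n(1+1/(119k^2+62k))$, Theorem \ref{mr} applies verbatim and gives $r(G,B_k)=2n-1$. Plugging this into the Burr--Erd\H{o}s--Spencer inequality yields
\[
r(G,tB_k)\le r(G,B_k)+(t-1)|B_k| = (2n-1)+(t-1)(k+2),
\]
which is exactly the claimed bound.

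The only point that needs a word of care is the applicability of the Burr--Erd\H{o}s--Spencer lemma: it requires $G$ and $H$ to be connected. Here $G$ is connected by hypothesis, and $H=B_k=K_2+\overline{K}_k$ is connected for every $k\ge 1$ (indeed it has diameter $2$), so the lemma applies directly. No separate verification of the numerical thresholds is needed beyond those already imposed for Theorem \ref{mr}, since Corollary \ref{1upper tB_k} uses the identical hypotheses $n\ge 34k^3$ and $e(G)\le n(1+1/(119k^2+62k))$.

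There is essentially no obstacle here — the statement is a one-line consequence of two previously established results. The only thing to be mildly attentive to is bookkeeping: making sure the vertex count $|B_k|=k+2$ is used (not $k+1$ or $k$), and that the multiplicity factor is $(t-1)$ rather than $t$. If I wanted to be slightly more self-contained I could also remark that this corollary is stated only as an auxiliary upper bound and will later be combined with the trichotomy argument of Section \ref{section5} to pin down $r(G,tB_k)$ exactly; but for the proof of the corollary itself nothing more than the two cited results is required.
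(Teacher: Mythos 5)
Your proposal is correct and matches the paper's own derivation: the corollary is obtained exactly by combining Theorem \ref{mr}, which gives $r(G,B_k)=2n-1$ under the stated hypotheses, with the Burr--Erd\H{o}s--Spencer lemma $r(G,tH)\le r(G,H)+(t-1)|H|$ applied with $H=B_k$ and $|B_k|=k+2$. Nothing further is needed.
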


\begin{cor} \label{3upper tB_k}
    If $G$ is a  simple graph on $n$ vertices and $m$ edges, then for any positive integers $k,t$,
     $$r(G,tB_k) \leq n+2m k-\frac{2m}{n}+(t-1)(k+2).$$
\end{cor}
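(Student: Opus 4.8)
The plan is to reduce the statement to the single-book bound already proved as Lemma~\ref{upper}. The crucial inequality is
$$r(G,tB_k)\le r(G,B_k)+(t-1)(k+2),$$
which is precisely the lemma of Burr, Erd\H{o}s and Spencer stated above, applied with $H=B_k$ and using $|B_k|=k+2$. Once this is in hand, substituting the estimate $r(G,B_k)\le n+2km-2m/n$ from Lemma~\ref{upper} yields $r(G,tB_k)\le n+2km-2m/n+(t-1)(k+2)$, which is exactly the claim. So the whole proof is two lines of combination plus a sentence justifying the reduction.

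The one point that deserves a moment's care is that Corollary~\ref{3upper tB_k} is stated for an \emph{arbitrary} simple graph $G$, whereas the Burr--Erd\H{o}s--Spencer lemma as quoted requires $G$ to be connected; this generality matters because in Section~\ref{section5} the corollary will be fed graphs $G_0,G'$ obtained by surgery on $G$ that need not be connected. I would therefore prefer to prove the reduction inequality directly, by the same greedy extraction that underlies the Burr--Erd\H{o}s--Spencer bound but which uses no hypothesis on $G$: set $N=r(G,B_k)+(t-1)(k+2)$ and take any red--blue colouring of $K_N$ with no red $G$. Since $N\ge r(G,B_k)$ there is a blue $B_k$; delete its $k+2$ vertices. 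After $i$ such deletions the surviving clique has order $N-i(k+2)$, which is at least $r(G,B_k)$ whenever $i\le t-1$, so a fresh blue $B_k$ is again forced. Iterating $t$ times produces $t$ pairwise vertex-disjoint blue copies of $B_k$, i.e.\ a blue $tB_k$, establishing $r(G,tB_k)\le N$.

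Combining the two displays then finishes the proof. There is no genuine obstacle here — the argument is entirely routine — and the only thing I would be careful to get right is the bookkeeping of the clique order in the greedy step (so that the $B_k$ at stage $t-1$ is still guaranteed) and the fact that $|B_k|=k+2$, which is what makes the additive term $(t-1)(k+2)$ appear rather than, say, $(t-1)(k+1)$.
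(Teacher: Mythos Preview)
Your proposal is correct and follows exactly the paper's approach: the corollary is obtained by combining Lemma~\ref{upper} with the Burr--Erd\H{o}s--Spencer inequality $r(G,tH)\le r(G,H)+(t-1)|H|$ applied with $H=B_k$. Your self-contained greedy proof of that inequality is a welcome bit of care, since the lemma as quoted in the paper assumes $G$ connected while the corollary does not; note however that your stated motivation is off --- the only graph fed to this corollary in Section~\ref{section5} is $G'$, which the paper explicitly observes is connected, so the generality is not actually needed for the application.
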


Because $tB_k$ is a subgraph of $tK_2+\overline{K}_{tk}$, it is essential to first identify a blue $tK_2$. Thus we need the following two lemmas, which yield that $r(G,tK_2)=n+t-1$ for some graph $G$ on $n$ vertices and $t\geq 2$.
 \begin{lem} (Chv\'atal and Harary \cite{Chvatal1972}) \label{2K_2}
For any graph $G$ on $n$ vertices that contains no
 isolated vertices and $G$ is not complete, 
 $$r(G,2K_2)=n+1.$$
    
\end{lem}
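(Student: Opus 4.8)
The plan is to prove both bounds directly, resting everything on the elementary structural fact that a graph containing no $2K_2$ (equivalently, with matching number at most $1$) is either a star or a triangle. Concretely, if every two edges of a graph intersect but they do not all pass through a single vertex, then any edge avoiding a fixed vertex $u$ through which two edges $uv,uw$ run must meet both of them, forcing that edge to be $vw$ and hence creating the triangle $uvw$; a fourth edge would then fail to meet one of $uv,uw,vw$, so the graph is exactly $K_3$. Thus a $2K_2$-free graph either has all its edges incident to a common vertex, or its edges lie within (and form) a triangle.

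For the lower bound $r(G,2K_2)\ge n+1$, I would exhibit a coloring of $K_n$ avoiding both a red $G$ and a blue $2K_2$. Fix a vertex $v$, color every edge incident to $v$ blue and all remaining edges red. The blue graph is the star $K_{1,n-1}$, which has matching number $1$ and so contains no $2K_2$. The red graph is a $K_{n-1}$ on $V(K_n)\setminus\{v\}$ together with $v$ isolated in red. A spanning red copy of $G$ would have to use all $n$ vertices, so some vertex of $G$ would be placed at $v$ and thereby become isolated in red, contradicting that $G$ has no isolated vertices. This is exactly where the ``no isolated vertices'' hypothesis enters.

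For the upper bound $r(G,2K_2)\le n+1$, I would take an arbitrary red-blue coloring of $K_{n+1}$ with no blue $2K_2$ and produce a red $G$, splitting on the structure above. In the star case, let $v$ be the common vertex of all blue edges; deleting $v$ leaves a red $K_n$ on the other $n$ vertices, and since $K_n$ contains every $n$-vertex graph, a red $G$ appears at once. In the triangle case, with blue edges exactly $\{ab,bc,ca\}$, I would delete one triangle vertex, say $c$; the red graph on the remaining $n$ vertices is $K_n$ with only the single edge $ab$ missing, i.e. $K_n-e$. Because $G$ is not complete it has a non-adjacent pair of vertices, which I map to the two endpoints of the missing edge $e$; then every edge of $G$ maps to a present (red) edge, so $G$ embeds in $K_n-e$, giving a red $G$. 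Here the ``$G$ not complete'' hypothesis is precisely what is needed.

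The only genuinely structural step is the classification of $2K_2$-free graphs as stars or triangles; everything else reduces to the two routine embedding facts $K_n\supseteq G$ and $K_n-e\supseteq G$ (for non-complete $G$). Consequently I do not anticipate any serious obstacle, and the argument is short and self-contained. The one point requiring care is bookkeeping: confirming that each hypothesis is used exactly once, ``no isolated vertices'' in the lower bound and ``$G$ not complete'' in the upper bound, and checking the small-order boundary (the hypotheses already force $n\ge 3$, so both $K_n$ and the triangle make sense).
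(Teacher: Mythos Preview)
The paper does not supply its own proof of this lemma; it is quoted as a known result of Chv\'atal and Harary and used as a black box. Your argument is correct and is essentially the standard one: the structural classification of $2K_2$-free graphs as ``star or triangle (plus isolated vertices)'' is exactly right, the lower-bound coloring is the canonical one, and in the upper bound the embeddings $G\subseteq K_n$ (star case) and $G\subseteq K_n-e$ via a non-adjacent pair of $G$ (triangle case) are precisely what is needed. Your bookkeeping on where each hypothesis is used and the observation that the hypotheses force $n\ge 3$ are also accurate.
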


\begin{lem} (Zhang and Chen \cite{ZC}) \label{tK2}
Let $t\geq 3$ be an integer and $n \geq (4t-5)(2t-3)+3$.
     If $G$ is a connected graph with $n$ vertices and  at most $n+n^2/(4t-5)-2$ edges, then
 $$r(G,tK_2) = n+t-1.$$
     
\end{lem}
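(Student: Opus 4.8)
The plan is to prove the two matching bounds separately. For the lower bound $r(G,tK_2)\ge n+t-1$ I would apply Theorem~\ref{lowerbound}: the matching $tK_2$ is bipartite, so $\chi(tK_2)=2$, and since every proper $2$-coloring puts exactly one endpoint of each edge in each class, the chromatic surplus is $s=t$; as $n\ge t$ this yields $r(G,tK_2)\ge (n-1)(2-1)+t=n+t-1$. Concretely this is witnessed by coloring $K_{n+t-2}$ as a red $K_{n-1}$ together with $t-1$ further vertices joined in blue to everything: every red component has fewer than $n$ vertices and hence contains no connected $G$ on $n$ vertices, while every blue edge meets the $t-1$ added vertices, so there is no blue $tK_2$.

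For the upper bound, set $N=n+t-1$ and suppose $K_N$ is $2$-colored with no blue $tK_2$; I want a red $G$. Let $M$ be a maximum blue matching, say $|M|=m\le t-1$, and let $W$ be the set of vertices missed by $M$. Maximality forces $W$ to be a red clique, since a blue edge inside $W$ would extend $M$, and $|W|=N-2m=n+t-1-2m$. If $m\le (t-1)/2$ then $|W|\ge n$, and the red clique already contains the connected graph $G$, so we are done. Assume therefore $m>(t-1)/2$. The key structural step is a length‑three augmenting‑path dichotomy applied to each matched blue edge $u_iv_i$: if $u_i$ had a blue neighbor $w_1\in W$ and $v_i$ a blue neighbor $w_2\in W$ with $w_1\ne w_2$, then swapping $u_iv_i$ for $u_iw_1$ and $v_iw_2$ would enlarge $M$. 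Hence for each pair either one endpoint is red‑complete to $W$, or both endpoints are red‑adjacent to all of $W$ except a single common vertex. Selecting one such endpoint $x_i$ from each pair and setting $W'=W\cup\{x_1,\dots,x_m\}$ gives $|W'|=n+t-1-m\ge n$, where $W$ is a red clique, each $x_i$ is red‑adjacent to all of $W$ but at most one vertex, and only the $x_i$–$x_j$ edges are uncontrolled.

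To build a red $G$ inside $W'$ I would map a carefully chosen independent set $I\subseteq V(G)$ of size $2m-(t-1)\le t-1$ onto $\{x_1,\dots,x_{2m-t+1}\}$ and the remaining $|W|$ vertices of $G$ bijectively onto $W$ (the counts match, since $|V(G)\setminus I|=n-(2m-t+1)=|W|$). Independence of $I$ means no edge of $G$ is mapped to an uncontrolled $x_i$–$x_j$ pair; edges inside $V(G)\setminus I$ land in the red clique $W$; and edges between $I$ and $V(G)\setminus I$ land on red $x_i$–$W$ pairs, provided no neighbor of the $G$‑vertex placed at $x_i$ is sent to the single forbidden $W$‑vertex of that $x_i$. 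This is a system of at most $t-1$ avoidance constraints, each easy to dodge because $|W|\ge n-t+1$ is large, which a short Hall/greedy argument settles once $I$ is chosen from vertices of not‑too‑large degree. Producing such a low‑degree independent set is precisely where the hypotheses are consumed: the bound $e(G)\le n+n^2/(4t-5)-2$ controls the average degree, and a Caro–Wei/Turán‑type estimate together with $n\ge (4t-5)(2t-3)+3$ guarantees an independent set of $t-1$ vertices of degree $O(n/t)$, with ample slack for the avoidance step.

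The main obstacle is exactly this calibration: checking that the stated edge and order bounds deliver an independent set of the required size made up of sufficiently low‑degree vertices, and that the resulting at most $t-1$ avoidance constraints inside $W$ can be satisfied simultaneously. Everything else — the lower bound, the red clique $W$, and the augmenting‑path dichotomy — is robust and essentially combinatorial; the quantitative heart of the lemma lies in matching the Caro–Wei bound against $2m-(t-1)$ and leaving room for the embedding. I would also note that the separate hypothesis $t\ge 3$ (with $t=2$ handled by Lemma~\ref{2K_2}) reflects only the form of the edge bound, since the argument above is uniform in $t$.
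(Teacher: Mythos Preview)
The paper does not prove this lemma: it is quoted verbatim from Zhang and Chen~\cite{ZC} and invoked as a black box in the proof of Theorem~\ref{mr2}. There is therefore no in-paper argument to compare your proposal against.

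On its own merits, your outline is along the standard lines for matching-type Ramsey upper bounds and the structural part is correct: the lower bound via Theorem~\ref{lowerbound} is immediate; taking a maximum blue matching $M$, the set $W$ of unmatched vertices is a red clique; and your augmenting-path dichotomy for each matched edge is valid and yields, for each $i$, a vertex $x_i$ that is red to all of $W$ except at most one forbidden vertex. The reduction to embedding $G$ by sending an independent set $I$ of size $2m-(t-1)\le t-1$ onto selected $x_i$'s and the rest of $V(G)$ bijectively onto $W$ is also the right picture.

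What remains genuinely unproved in your proposal is exactly the step you flag: producing such an $I$ consisting of vertices whose total degree is small enough that the at most $t-1$ avoidance constraints in $W$ can be simultaneously satisfied. A bare Caro--Wei bound gives $\alpha(G)\gtrsim (4t-5)/2$, just clearing $t-1$, but says nothing about the degrees of the chosen vertices; and with $e(G)$ allowed as large as $n+n^{2}/(4t-5)-2$, vertices of $G$ may have degree of order $n/t$, so the embedding is tight rather than slack. The precise constants in the hypotheses $e(G)\le n+n^{2}/(4t-5)-2$ and $n\ge (4t-5)(2t-3)+3$ are visibly tuned to this step, and your sketch does not verify that they suffice. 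So the plan is sound, but the quantitative heart of the lemma is asserted rather than established; to complete the argument you would need to actually carry out the independent-set-plus-avoidance calculation against those specific bounds, or consult~\cite{ZC} for the intended execution.
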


\vskip 2mm
We now begin to  prove Theorem \ref{mr2}.

 By Theorem \ref{lowerbound}, it suffices to prove the upper bound. We use induction on $t$. When $t=1$, it is verified
 by Theorem \ref{mr}. Assuming the theorem holds for $t -1$, we now proceed to consider the case where $k\geq 1$ and $t \geq 2$.

 To apply the inductive method, it is necessary to ensure that the lower bound on $n$ and the upper bound on $e(G)$ are
 compatible with induction. This can be derived from the facts that the lower bound on $n$ is increasing in $t$ and the upper bound on $e(G)$ is decreasing in $t$.
 
 Let $N=2n+t-2$. Given a red-blue edge-coloring of $K_N$, assume that $K_N$ contains neither a red $G$ nor a blue $tB_k$. Our argument divides into three cases.

\vskip 2mm
\noindent {\textbf{Case 1.}} $G$ has a suspended path with at least $2t(tk+2t-1)+tk+\lceil(tk+t-k-1)/2\rceil$ vertices.
\vskip 2mm
Let $H$ be the graph obtained from $G$ by shortening the suspended path by $\lceil(tk+t-k-1)/2\rceil$ vertices. Since $n\geq 111t^3k^3$, we have 
 \begin{align*}
 |H| &=n-\lceil(tk+t-k-1)/2\rceil \geq 34k^3, \\
   e(H) &\leq n\left(1+\frac{1}{127 t^2 k^2 + 79 t^2 k}\right)-\lceil(tk+t-k-1)/2\rceil \\
   &\leq \left(n-\lceil(tk+t-k-1)/2\rceil \right) \left(1+\frac{1}{119k^2+62k}\right).
 \end{align*}
 Then by Corollary \ref{1upper tB_k}, $$r(H,tB_k)\leq 2(n-\lceil (tk+t-k-1)/2\rceil)-1+(t-1)(k+2)\leq 2n+t-2=N.$$ Therefore $K_N$ contains a red $H$. Let $H'$ be the subgraph of $K_N[R]$ which can
 be obtained from $H$ by lengthening the suspended path as much as possible (up to
 $\lceil(tk+t-k-1)/2\rceil$ vertices). If $H'$ is $G$, the proof of this case is complete. If not, since $|K_N-H|\geq N-(n-1)=n+t-1$, there must be a blue $tK_2$ in $K_N-H$ by Lemmas \ref{2K_2} and \ref{tK2}. Let the vertices of the suspended path $P$ be relabeled as $x_1,x_2,\ldots,x_a$, where $a\geq 2t(t(k+2)-1)+tk$. Let the vertices of the blue $tK_2$ be relabeled as $y_1,y_2,\ldots,y_b$, and then let $b=2t$. Also, let $c=t(k+2)$ and $d=tk$. So $a \geq b(c -1)+d$. By applying Lemma \ref{path extension} and the maximality of $P$, we either have a blue subgraph $K_{t(k+2)}$, or
there exist $tk$ vertices among $\{x_1,x_2,\ldots,x_a\}$ such that every edge between these $tk$ vertices and the vertices $\{y_1,y_2,\ldots,y_b\}$ is blue which implies there is a blue subgraph $tK_2+\overline{K}_{tk}$. In both cases, there
 exists a blue subgraph $tB_k$, a contradiction 

 \vskip 2mm
  \noindent  {\textbf{Case 2.}} $G$ has a matching consisting of $2tk+t-2$ end-edges.
 \vskip 2mm
By the induction hypothesis, we have
 $$r(G,(t-1)B_k)\leq 2n+t-3<N,$$
 which implies that the graph $K_N$ contains a blue subgraph $(t-1)B_k$. We denote the
 vertex set of this subgraph by $A$.

Let $H$ be the graph obtained from $G$ by deleting $2tk+t-2$ end-vertices of this matching. Because $n\geq 111t^3k^3 $,
\begin{align*}
|H| &=n-2tk-t+2 \geq 34k^3, \\
e(H) & \leq  n\left(1+\frac{1}{127 t^2 k^2 + 79 t^2 k}\right)-(2tk+t-2)\\
& \leq  (n-2tk-t+2)\left(1+\frac{1}{119k^2+62k}\right).
\end{align*}
By Theorem \ref{mr}, we get 
\begin{align*}
   r(H,B_k) \leq & 2(n-2tk-t+2)-1\\
   =& 2n-4tk-2t+3\\
   \leq & 2n+t-2-(t-1)(k+2)\\
   =& N-|A|,
\end{align*}
which implies that $K_N-A$ contains a red $H$.
Let $X$ be the set of $2tk+t-2$ vertices of $H$ which were adjacent to the end-vertices deleted from $G$ and $Y=V(K_N)-V(H)$. By Lemma \ref{matching}, there is either a red matching that covers $X$, or a blue $K_{c+1,|Y|-c}$, where $0 \leq c \leq |X|-1$. 
If the former holds, then $K_N$ would contain a red subgraph
 isomorphic to $G$, leading to a contradiction. If the latter holds, let $X'$ be the vertices of $X$ and $Y'$ be the vertices of $Y$ contained in the blue $K_{c+1,|Y|-c}$. Since $|Y'|=|Y|-c\geq 2n+t-2-(n-|X|)-(|X|-1)=n+t-1$, there must be a blue $tK_2$ by Lemmas \ref{2K_2} and \ref{tK2} in $K_N[Y']$. Thus, if $c+1\geq tk$, then the blue $tK_2$ in $K_N[Y']$ along with $tk$ vertices of $X'$ gives a blue $tK_2+\overline{K}_{tk}$, which contains $tB_k$ as a subgraph.
  Next consider the case  $c<tk-1$. Thus by Lemma \ref{n+k-1},
  \begin{align*}
      |Y'|&> 2n+t-2-(n-(2tk+t-2))-(tk-1)\\
      &=n+k-1+(t-1)(k+2)\\
      &=n+k-1+|A|.
  \end{align*}
Therefore $K_N[Y'-A]$ contains a blue $K_{1,k}$, which along with a vertex in $X'$ gives a blue $K_1+K_{1,k}=B_k$. Combining it with the blue $(t-1)B_k$ from $A$ would yield a blue $tB_k$. Thus for each of the two possibilities we 
reach a contradiction.

 \vskip 2mm
\noindent {\textbf{Case 3.}} 
$G$ has neither a suspended path of $2t(tk+2t-1)+tk+\lceil(tk+t-k-1)/2\rceil$ vertices, nor a matching formed by $2tk+t-2$ end-edges.
\vskip 2mm
For convenience, we let $e(G)=n+\ell$, $q=2t(tk+2t-1)+tk+\lceil(tk+t-k-1)/2\rceil$, and $s=2tk+t-2$, where $\ell\leq n/(127 t^2 k^2 + 79 t^2 k)$.

After removing all vertices of degree 1 from the graph $G$, we
 obtain a graph denoted by $G'$. Clearly, $G'$ is a connected graph. Because $n\geq 111t^3k^3$,
 by Lemma \ref{trichotomy}, we have
\begin{align*}
   |G'| &\leq \gamma=(q-2)(2s+3\ell-2)+1\\
   &\leq (q-2)(2s-2)+1+\frac{3(q-2)}{127 t^2 k^2 + 79 t^2 k}n \\
    &\leq \frac{49t^2k}{127 t^2 k^2 + 79 t^2 k}n+\frac{6t^2k+12t^2+9tk/2-9t/2-3k/2-6}{127 t^2 k^2 + 79 t^2 k}n\\
   & = \frac{55t^2k+12t^2+9tk/2-9t/2-3k/2-6}{127 t^2 k^2 + 79 t^2 k}n.
\end{align*}
Moreover,
\begin{align*}
|G'|-1 \leq e(G') \leq & |G'|+\frac{n}{127 t^2 k^2 + 79 t^2 k}\\
\leq &  \frac{55t^2k+12t^2+9tk/2-9t/2-3k/2-5}{127 t^2 k^2 + 79 t^2 k}n.    
\end{align*}
By Lemma \ref{trichotomy},  $G$ has a vertex adjacent to at least 
$$\left\lceil \frac{n-\gamma}{2tk+t-3} \right\rceil$$ 
vertices of degree 1.
Denote this vertex by $v$. By Theorem \ref{t bookstar}, there exists a red star $K_{1,n-1}$ in $K_N$, with the center
 vertex denoted by $x$. By Corollary \ref{3upper tB_k}, if $|G'| \geq 2$, then
\begin{align*}
r(G',tB_k)&\leq |G'|+2ke(G')-\frac{2e(G')}{|G'|}+(t-1)(k+2)\\
    &\leq \frac{110t^2k^2 + 79t^2k + 12t^2 + 9tk^2-4tk-4t}{127 t^2 k^2 + 79 t^2 k}n-1\\
    &<  n-1.
\end{align*}
Thus $K_N[N_R(x)]$ contains a red $G'$. Moreover, this holds trivially if $|G'| = 1$. Replacing $v$ in this $G'$ by $x$ gives another copy of red $G'$. Next, based on the embedding of $G'$, we apply a greedy algorithm to embed $G''$ into
 the red subgraph of $K_N$, where $G''$ is obtained by removing all degree-1 vertices adjacent to $v$ in the graph $G$. Clearly, $G''$ contains $G'$ as a subgraph and
 $$|G''| \leq n-\left\lceil \frac{n-\gamma}{2tk+t-3}\right\rceil.$$
If at some step the embedding cannot proceed, there exists
 a vertex $z$ in $K_N$ that is adjacent to at most $|G''|-2$ red edges. In other words, the
 number of blue edges incident to vertex $z$ is at least $N-|G''|+1$. Since $n\geq 111t^3k^3$, we have $n-\gamma \geq (2tk+t-3)(tk+t-2)$, then
 \begin{align*}
    N-|G''|+1 &\geq 2n+t-2-n+\left\lceil \frac{n-\gamma}{2tk+t-3}\right\rceil +1\\
    &=n+t-1+\left\lceil \frac{n-\gamma}{2tk+t-3}\right\rceil \\
   &\geq  n+t-1+(tk+t-2)\\
    & = n+k-1+(t-1)(k+2)\\
    & \geq r(G,K_{1,k})+|(t-1)B_k|.
 \end{align*}
By the induction hypothesis, we have
 $$r(G,(t-1)B_k)\leq 2n+t-3=N-1,$$
 which implies that the graph $K_N\setminus \{z\}$ contains a blue subgraph $(t-1)B_k$. We denote the
 vertex set of this subgraph by $A$. In the blue neighborhood of vertex
 $z$, there are at least $n+k-1$ vertices that do not belong to the set $A$. By Lemma \ref{n+k-1}, in the graph induced by these vertices, there exists either a red subgraph
 isomorphic to $G$, or a blue $K_{1,k}$, which, together with vertex $z$ and the vertices in $A$,
 induces a blue subgraph containing $tB_k$. Both cases lead to a contradiction.

 Thus, $G''$ can always be embedded into the red subgraph of $K_N$. Next, we only need
 to embed the degree-1 vertices adjacent to $v$, and the entire graph $G$ can be embedded
 into the red subgraph of $K_N$. This is feasible because the vertex $x$ into which $v$ is
 embedded is adjacent to at least $n-1$ red edges, so there are enough vertices to embed the degree-1 vertices of $v$. This completes the proof of Theorem \ref{mr2}. \qed

\section{Concluding Remark}
The restrictions on the range of $n$ and the size of $G$ are necessitated by the third cases in the proofs  of Theorems \ref{mr} and \ref{mr2}. Indeed, there exists an inherent trade-off between the lower bound on $n$ and the upper bound on $e(G)$. 
We take Theorem \ref{mr} as an example to explain this trade-off relationship.
Let $c>0$ be a constant, $k$ be a positive integer and
\begin{align*}
g(k,c) &=(2c+21)k^2+(c+\frac{25}{2})k,\\
 f(k,c) &= \frac{1}{ck}(14k^2-14k+1)\left((2c+21)k^2+(c+\frac{25}{2})k\right)\\
     &=\left(28 + \frac{294}{c}\right)k^{3} - \left(42 + \frac{119}{c}\right)k^{2} - \left(12 + \frac{154}{c}\right)k + 1 + \frac
{25}{2c}.
 \end{align*}
In fact, we can show that Theorem \ref{mr} holds for a connected $G$ with $n\ge f(k,c)$ vertices and at most $ n(1+1/(g(k,c))$ edges.
For the sake of conciseness, we take $c =49$ and adopt suitably amplified lower bounds on $n$ to present Theorem \ref{mr}. 

Notably, the lower bound on $n$ and the upper bound on $e(G)$ are by no means tight. But significant improvement in these bounds will necessitate different methods.

 \section*{Acknowledgments}
This research was supported  by National Key R\&D Program of China under grant number 2024YFA1013900 and NSFC under grant number 12471327.

\end{document}